  \newcommand{\tabincell}[2]{\begin{tabular}{@{}#1@{}}#2\end{tabular}}
\title{Complex Balancing Reconstructed to the Asymptotic Stability of Mass-action Chemical Reaction Networks with Conservation Laws
}
\author{Min Ke\footnotemark[2], Zhou Fang\footnotemark[2]
	\and Chuanhou Gao\footnotemark[2]}
\begin{document}

\maketitle

\renewcommand{\thefootnote}{\fnsymbol{footnote}}

\footnotetext[2]{School of Mathematical Sciences, Zhejiang University,
	Hangzhou 310027, P. R. China. (\email{kemin@zju.edu.cn, Zhou\_Fang@zju.edu.cn,
		gaochou@zju.edu.cn (Correspondence)}).}

\renewcommand{\thefootnote}{\arabic{footnote}}

\begin{abstract}
Motivated by the fact that the {pseudo-Helmholtz function} is a valid Lyapunov function for characterizing asymptotic stability of complex balanced mass action systems (MASs), this paper develops the generalized {pseudo-Helmholtz function} for stability analysis for more general MASs {assisted with conservation laws}. The key technique is to transform the original network into two different MASs, defined by reconstruction and reverse reconstruction, with an important aspect that the dynamics of the original network for free species is equivalent to that of the reverse reconstruction. Stability analysis of the original network is then conducted based on an analysis of how stability properties are retained from the original network to the reverse reconstruction. We prove that the reverse reconstruction possesses only an equilibrium in each positive stoichiometric compatibility class if the corresponding reconstruction is complex balanced. Under this complex balanced reconstruction strategy, the asymptotic stability of the reverse reconstruction, which also applies to the original network, is thus reached by taking the generalized {pseudo-Helmholtz function} as the Lyapunov function. To facilitate applications, we further provide a systematic method for computing complex balanced reconstructions {assisted with conservation laws}. Some representative examples are presented to exhibit the validity of the complex balanced reconstruction strategy.
\end{abstract}

\begin{keywords}
Chemical reaction network, Mass action system, Generalized {pseudo-Helmholtz function}, Reconstruction, Asymptotic stability, Conservation laws.
\end{keywords}

\begin{AMS}
34D20, 80A30, 93C15, 93D20
\end{AMS}

\section{Introduction}
Chemical reaction networks (CRNs) taken with mass action kinetics
arise often in {chemistry}, biology and engineering. The dynamics of a mass action system (MAS) can be modeled by polynomial ordinary differential equations (ODEs) with respect to the concentration of each chemical species that is a nonnegative real number. Although the concentration
evolution takes place in a certain invariant set termed {the} stoichiometric
compatibility class, these ODEs will typically lead to a high-dimensional and complex nonlinear system, often referred to as a kinetic system, with unknown model parameters. The model parameters and the network structure both have great effect on the kinetic system, so it is usually extremely difficult to characterize the dynamical properties. However, the past decades witnessed the rapid progress in related directions, including characterizing stability \cite{Al-Radhawi2016, Anderson15, Donnell13, Feinberg87, Horn72, Rao13, Sontag2001, Schaft13}, multi-stationarity \cite{Craciun2006, Feinberg1995, Joshi12} and persistence \cite{ Anderson2011, Angeli2007, Angeli2011,Craciun2013,Pantea2012}. Now, the dynamical properties have gradually been coming to light, especially for those network systems with special structures, such as reversible and weakly reversible structures. In line with these studies, this paper continues to be concerned with characterizing (local asymptotic) stability of equilibria in general MASs.

The pioneering work on stability analysis of equilibria in MASs originated from
\cite{Horn72}, in which complex balanced MASs were proved to have no self-sustaining oscillation or bistability. This paper \cite{Horn72} reported the famous Deficiency Zero Theorem, saying that for a deficiency zero weakly reversible CRN with any choice of rate constants, the dynamical equation {admits a unique positive equilibrium in each positive stoichiometric compatibility class, and these positive equilibria are locally asymptotically stable.} This theorem was further extended to the Deficiency One Theorem \cite{Feinberg1995}, which gives alternative conditions for the existence and uniqueness of equilibrium
in each positive stoichiometric compatibility class, i.e., weakly reversible but not necessary deficiency zero. Recently, the stability issue of complex balanced MASs \cite{Rao13}, and of detailed balanced MASs (a special case of complex balanced systems) \cite{Schaft13} were revisited, and a compact mathematical formulation about the network were derived that allows to characterize the set of positive equilibria and their stability properties, easily. For complex balanced systems, if they are assumed to admit persistence
\cite{Feinberg87}, then the stability of equilibria is global \cite{Anderson2011,Pantea2012}.


In this paper, we consider a class of CRNs, like
\begin{equation*}
3X_2\to 3X_1\to 2X_1+X_2
\end{equation*}
which are not weakly reversible, for characterising stability of the corresponding MASs.
This class of CRNs are often encountered in enzymic catalytic reactions and epidemic models \cite{Hethcote2000}. With respect to stability analysis, the key issue is how to construct the Lyapunov function. Anderson et. al.
\cite{Anderson15}
treated scaling limits of non-equilibrium potential as Lyapunov functions for some special species balanced systems, for example, birth-death systems, etc. Al-Radhawi and Angeli
\cite{Al-Radhawi2016}
developed Lyapunov functions that are piecewise linear in rates. {The existence of such Lyapunov functions could guarantee stability of equilibria}, and further serve to establish asymptotic stability if the Lyapunov functions also satisfy the LaSalle's condition. Clearly, it poses a great challenge to find a Lyapunov function for this class of MASs. Another alternative strategy for capturing their stability properties is to apply the (asymptotic) stability results of complex balanced systems, after all, for which there is a ready-made Lyapunov function, the {pseudo-Helmholtz function}. Linear conjugacy \cite{Johnston2011,Johnston2012,Johnston2013}
is a related notion for this purpose that defines a non-degenerative linear transformation to map the mass action system under study to another one. If the transformed system is complex balanced, then the original system has local asymptotic stability
\cite{Johnston2011}. To obtain expected linear conjugacy, this strategy usually calls for solving linear programming and mixed-integer linear programming problems, analogous to computing complex balanced or detailed balanced realizations \cite{Szederkenyi2011a}.

Motivated by the facts that the {pseudo-Helmholtz function} is a valid Lyapunov function for asymptotic stability analysis for complex balanced MASs and general MASs admitting a complex balanced MAS as the linear conjugacy, we develop the generalized {pseudo-Helmholtz function} aiming to characterize asymptotic stability for general MASs. Based on the mass conservation law, the reconstruction and reverse reconstruction notions are adaptively developed, which are highly related to the original network in properties of equilibria and stability. More importantly, the dynamics of the reverse reconstruction is equivalent to that of the original network at the free species. The stability analysis for the latter is thus transformed into that for the former. Taking the generalized {pseudo-Helmholtz function} as the Lyapunov function, the reverse reconstruction is proved asymptotically stable when the corresponding reconstruction is complex balanced. The key point lies that under this condition the reverse reconstruction possesses only an equilibrium in each positive stoichiometric compatibility class. Utilizing {dynamical} equivalence and mass conservation law, the state of the original network is proved to converges asymptotically to the equilibrium of interest. To facilitate applications of the proposed strategy of stability analysis, we also present a systematic method to compute complex balanced reconstructions based on the computation algorithms for realizations \cite{Szederkenyi2010,Szederkenyi2011a,Szederkenyi2011b}.

The remainder of this paper is organized as follows. Section \ref{sec:2} gives a brief introduction on CRNs and further formulates the research motivation. In Section \ref{sec3}, the generalized {pseudo-Helmholtz function} is developed with the potential to behave as a Lyapunov function. Section \ref{sec4} defines the reconstruction concept to match the use of the generalized {pseudo-Helmholtz function} for characterizing stability of general MASs. This is followed by further defining reverse reconstruction and complex balanced reconstruction to characterize asymptotic stability of MASs in Section \ref{sec:4}. Section \ref{sec:5} provides a systematic method to compute complex balanced reconstructions, and then illustrates it through some representative examples. Finally, Section \ref{sec:6} concludes the paper.

Notations: Throughout the paper, $\mathbb{R}^{n}$, $\mathbb{R}^{n}_{> 0}$, $\mathbb{R}^{n}_{\geq 0}$, $\mathbb{Z}^{n}$, $\mathbb{Z}^{n}_{\geq 0}$ denote the space of $n$ dimensional real, positive real, nonnegative real, integer and nonnegative integer vectors, respectively; for any $x\in \mathbb{R}^{n}_{> 0}$, the mapping
$\mbox{Ln}: \mathbb{R}^{n}_{\textgreater 0}\rightarrow \mathbb{R}^{n}, x\mapsto \mbox{Ln}(x)$ is defined by $(\mbox{Ln}(x))_{i}:=\ln(x_{i})$, and {the mapping $\mbox{Exp}(x)$ is similarly defined by $(\mbox{Exp}(x))_i=\mbox{exp}(x_i)$}; for matrix $A$, $A_{\cdot i}$ refers to the $i$th column of $A$; $\mathbbold{0}_n$ and $\mathbbold{1}_n$ denote the vector with all the entries equal to $0$ and $1$, respectively; matrix $D=\text{diag}(d_{i})\in\mathbb{R}^{n\times n}$ is a diagonal matrix with the $i$th diagonal element to be $d_{i}$; $E_{n}$ denotes the $n\times n$ identity matrix; $\mathrm{Card}(\cdot)$ represents the number of elements in set.

\section{Chemical reaction networks and Motivation statement}\label{sec:2}
In this section, we will review some basic concepts related to CRNs
\cite{Feinberg1995, Horn72} and set forth the motivation of the current study.

\subsection{Chemical reaction networks}
For a CRN involving $n$ species $X_i~(i=1,\cdots,n)$, $c$ complexes $\mathcal{C}_l~(l=1,\cdots,c)$ and $r$
reactions $\mathcal{R}_j~(j=1,\cdots,r)$, it is defined as follows
\cite{Feinberg1995}.

\begin{definition}[Chemical Reaction Network]\label{def:1}
A CRN is composed of three finite sets:
\begin{itemize}
  \item[\rm{(1)}] A set {$\mathcal{X}=\bigcup_{i=1}^{n}\{X_{i}\}$} of chemical species;
  \item[\rm{(2)}] A set {$\mathcal{C}=\bigcup_{j=1}^{r}\{Z_{\cdot j},Z'_{\cdot j}\}$} of
  complexes {with $\mathrm{Card}(\mathcal{C})=c$, $Z_{\cdot j}$, $Z'_{\cdot j}\in\mathbb{Z}^n_{\geq 0}$ and the $i$th entry of $Z_{\cdot j}$ representing the coefficient of $X_i$ in this complex};
  \item[\rm{(3)}] A set {$\mathcal{R}=\bigcup_{j=1}^{r}\{Z_{\cdot j} \to Z'_{\cdot j}\}$} of reactions such that $\forall~Z_{\cdot j}\in\mathcal{C}$, $Z_{\cdot j}\to Z_{\cdot j}\notin\mathcal{R}$ but $\exists~Z'_{\cdot j}\in\mathcal{C}$ to support either $Z_{\cdot j}
  \to Z'_{\cdot j}\in\mathcal{R}$ or $Z'_{\cdot j} \to Z_{\cdot j}\in\mathcal{R}$.
\end{itemize}
The triple $(\mathcal{X,C,R})$ is usually used to express a CRN.
\end{definition}

The complexes $Z_{\cdot j},Z'_{\cdot j}$ actually express the existence
of all of $n$ species in the reactant complex and the product complex, respectively. {If $Z_{ij}\neq 0$ then there includes the species $X_i$ in the complex $Z_{.j}$, and vice versa.}
{Organize distinct complexes according to} $Z=[Z_{\cdot 1},\cdots,Z_{\cdot c}]\in\mathbb{Z}^{n\times c}_{\geq 0}$ to form the complex stoichiometric matrix, whose $l$th column expresses the $l$th complex $Z_{\cdot l}$ in the CRN.

For any CRN, its structure can be described by a digraph, termed as reaction graph
\cite{Horn72}, with the vertices indicating the complexes and the directed edges representing the reactions. Every tail vertex in a directed edge denotes a reactant complex while the head vertex denotes a product complex. Mathematically, this digraph can be defined by an incident matrix $B\in\mathbb{Z}^{c\times r}$ with $B_{lj}=1$ if vertex $l$ is the head vertex of edge $j$ and $B_{lj}=-1$ if vertex $l$ is the tail vertex of edge $j$, while $B_{lj}=0$ otherwise. The incident matrix and the complex stoichiometric matrix further define the
stoichiometric matrix $S\in \mathbb{Z}^{n\times r}$ satisfying
\begin{equation}\label{Stoichiomatrix}
S=ZB.
\end{equation}
It is not difficult to find that the $j$th column of $S$ is given by $S_{\cdot j}=Z'_{\cdot j}-Z_{\cdot j}$, $j=1,\cdots,r$, which is also called the reaction vector of the $j$th reaction. The rank of $S$ usually represents the rank the CRN $(\mathcal{X,C,R})$, denoted by $s$, satisfying $s\leq\min\{n,r\}$.

\begin{definition}[Weakly Reversible CRN]\label{def:2}
A CRN $(\mathcal{X,C,R})$ is weakly reversible if for each
reaction $Z_{\cdot j}\to Z'_{\cdot j}\in\mathcal{R}$, there exists
a sequence of reactions in $\mathcal{R}$, which starts with {$Z'_{\cdot j}$} and ends with
$Z_{\cdot j}$, i.e., {$Z'_{\cdot j} \to Z_{\cdot j_{1}}\in\mathcal{R}$, $Z_{\cdot j_{1}} \to Z_{\cdot j_{2}}\in\mathcal{R}$}, $\cdots$, $Z_{\cdot j_{(m-1)}} \to Z_{\cdot j_{m}}\in\mathcal{R}$, $Z_{\cdot j_{m}} \to Z_{\cdot j}\in\mathcal{R}$.
\end{definition}

\begin{definition}[Stoichiometric Compatibility Class]\label{def:3}
For a CRN $(\mathcal{X,C,R})$, the linear subspace $\mathrm{Im}(S)=\mathrm{span}\{Z'_{\cdot
1}-Z_{\cdot 1},\cdots,Z'_{\cdot r}-Z_{\cdot r}\}$ is called the
stoichiometric subspace. Let $x_0\in\mathbb{R}^n_{\geq 0}$, then the sets $\mathscr{S}(x_0)=\{x_0+\xi|\xi\in\mathrm{Im}(S)\}$, $\mathscr{S}(x_0)\cap\mathbb{R}^n_{\geq 0}$ and $\mathscr{S}(x_0)\cap\mathbb{R}^n_{> 0}$ are named the stoichiometric compatibility class, the nonnegative stoichiometric compatibility class and the positive stoichiometric compatibility class of $x_0$, respectively.
\end{definition}

When a CRN is assigned a mass action kinetics, the rate $v_j$ for reaction $Z_{\cdot j}\to Z'_{\cdot j}$ is evaluated by
\begin{equation}\label{ReactionRate}
v_j(x)=k_jx^{Z_{\cdot j}}:=k_j\prod_{i=1}^{n}
x_{i}^{Z_{ij}},
\end{equation}
where $k_j\in\mathbb{R}_{\textgreater 0}$ is the rate constant for this reaction, and $x\in\mathbb{R}^n_{\geq 0}$ is the vector of concentrations $x_i$ of the chemical species $X_i$.

\begin{definition}[Mass Action System]
{A mass action system is a CRN $(\mathcal{X,C,R})$ taken together with a mass action kinetics.} Denote the vector of reaction rate constants by $\mathcal{K}=(k_1,\cdots,k_r)$ with $k_j$ representing the rate constant for reaction $Z_{\cdot j}\to Z'_{\cdot
j},~j=1,\cdots,r$, then a MAS is often referred to as a quadruple $(\mathcal{X},\mathcal{C},\mathcal{R},\mathcal{K})$.
\end{definition}

The dynamics of a MAS $(\mathcal{X},\mathcal{C},\mathcal{R},\mathcal{K})$ that captures the changes of concentrations of every species is thus expressed as
\begin{equation}\label{CRN-dynamics0}
\dot{x}=Sv(x),\quad\quad x\in\mathbb{R}^n_{\geq 0},
\end{equation}
where $v(x)\in\mathbb{R}_{\geq 0}^{r}$ is the vector of reaction rate $v_j(x)$ of $\mathcal{R}_j$, $j=1,\cdots,r$.

\begin{remark}\label{StateEvolution}
Whatever the initial state $x_0$ is, the state of a MAS $(\mathcal{X},\mathcal{C},\mathcal{R},\mathcal{K})$ will evolve in the nonnegative stoichiometric compatibility class of $x_0$, i.e., in $\mathscr{S}(x_0)\cap\mathbb{R}^n_{\geq 0}$, which can be easily verified by integrating \cref{CRN-dynamics0} from $0$ to $t$, that is,
\begin{equation}\label{statepoint}
  x=x_{0}+\sum_{j=1}^{r}S_{\cdot j}\int_{0}^{t}v_{j}(\tau)d\tau.
\end{equation}
\end{remark}

\begin{remark}
The dynamical equation \cref{CRN-dynamics0} is essentially a polynomial system of ODEs, often referred to as a kinetic system. Note that {there is} not a one-to-one relation between a kinetic system and a MAS. Theoretically, the former can be realized by structurally and/or parametrically different MASs that are dynamically equivalent \cite{Horn72}.
\end{remark}

{Here, the dynamical equivalence is defined as follows.}
\begin{definition}[{Dynamical Equivalence}]\label{def:33}
{Two MASs are said to be dynamically equivalent if they have identical differential equations produced under the assumption of mass action kinetics.}
\end{definition}

\begin{definition}[{Equilibrium}]\label{def:4}
A concentration vector $x^{\ast}\in \mathbb{R}^{n}_{> 0}$ is an equilibrium of a MAS $(\mathcal{X},\mathcal{C},\mathcal{R},\mathcal{K})$ if the dynamics of \cref{CRN-dynamics0} supports $Sv(x^{\ast})=\mathbbold{0}_n$, and a complex balanced equilibrium if $Bv(x^{\ast})=\mathbbold{0}_c$. A MAS $(\mathcal{X},\mathcal{C},\mathcal{R},\mathcal{K})$ is called complex balanced if it admits a complex balanced equilibrium.
\end{definition}

\begin{remark}
Any complex balanced equilibrium is {an equilibrium}, but not necessarily
the other way around, since $Z$ need not be injective.
\end{remark}

{A nice property of complex balanced MASs is that each positive stoichiometric compatibility class contains only one equilibrium, and each such equilibrium is locally asymptotically stable\mbox{\cite{Horn72}}.}

\begin{theorem}
[Locally asymptotic stability of complex balanced equilibrium \cite{Horn72}]\label{thm:1} Let $x^*\in\mathbb{R}^{n}_{> 0}$ be any complex balanced equilibrium of the MAS $(\mathcal{X},\mathcal{C},\mathcal{R},\mathcal{K})$, then $x^*$ is locally asymptotically stable with respect to any initial condition in $\mathscr{S}(x^{\ast})\cap\mathbb{R}^{n}_{> 0}$ near $x^*$.
\end{theorem}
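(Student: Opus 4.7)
The plan is to exhibit the pseudo-Helmholtz function
$G(x)=\sum_{i=1}^{n}\bigl(x_{i}\ln(x_{i}/x_{i}^{\ast})-x_{i}+x_{i}^{\ast}\bigr)$
as a local Lyapunov function for $x^{\ast}$. First I would check the static properties of $G$: it is strictly convex on $\mathbb{R}^{n}_{>0}$ because its Hessian is the positive-definite diagonal matrix $\mathrm{diag}(1/x_{i})$; it satisfies $G(x^{\ast})=0$ and $\nabla G(x^{\ast})=\mathbbold{0}_{n}$; and its restriction to the affine subspace $\mathscr{S}(x^{\ast})$ remains strictly convex, so $x^{\ast}$ is the unique minimiser of $G$ on $\mathscr{S}(x^{\ast})\cap\mathbb{R}^{n}_{>0}$ and in particular $G$ is positive definite relative to $x^{\ast}$ on this class.

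Next I would compute the rate of change of $G$ along trajectories of \cref{CRN-dynamics0}. Writing $\mu=\mathrm{Ln}(x)-\mathrm{Ln}(x^{\ast})$, we have $\nabla G(x)=\mu$, and since $S=ZB$,
\begin{equation*}
\dot{G}=\mu^{\top}Sv(x)=\mu^{\top}ZB\,v(x)=\sum_{Z_{\cdot j}\to Z'_{\cdot j}\in\mathcal{R}} v_{j}(x)\bigl((Z'_{\cdot j})^{\top}\mu-Z_{\cdot j}^{\top}\mu\bigr).
\end{equation*}
Using the mass-action form one has $v_{j}(x)=v_{j}(x^{\ast})\exp(Z_{\cdot j}^{\top}\mu)$. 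Setting $u_{l}=\exp(Z_{\cdot l}^{\top}\mu)$ for each complex $l$ and invoking the elementary inequality $a(\ln b-\ln a)\le b-a$ for $a,b>0$ (with equality iff $a=b$), each reaction $Z_{\cdot l}\to Z_{\cdot l'}$ contributes $v_{l\to l'}(x^{\ast})u_{l}(\ln u_{l'}-\ln u_{l})\le v_{l\to l'}(x^{\ast})(u_{l'}-u_{l})$. Summing and rearranging the sum over complexes, the coefficient of each $u_{l}$ is exactly the net flow into $l$ at $x^{\ast}$, which vanishes by complex balance $Bv(x^{\ast})=\mathbbold{0}_{c}$. Hence $\dot{G}\le 0$.

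For asymptotic stability I would then characterise the equality set. From the strict tangent-line inequality, $\dot{G}=0$ forces $u_{l}=u_{l'}$ for every reaction $l\to l'$, i.e.\ $(Z'_{\cdot j}-Z_{\cdot j})^{\top}\mu=0$ for all $j$, equivalently $\mu\in(\mathrm{Im}\,S)^{\perp}$. But a trajectory starting in $\mathscr{S}(x^{\ast})\cap\mathbb{R}^{n}_{>0}$ keeps $x-x^{\ast}\in\mathrm{Im}\,S$ by \cref{statepoint}. Applying strict convexity of $G$ on $\mathscr{S}(x^{\ast})\cap\mathbb{R}^{n}_{>0}$ (a Birch-type argument: $x^{\ast}$ and any other zero of $\mu$ perpendicular to $\mathrm{Im}\,S$ on the same compatibility class would both be critical points of the strictly convex restriction of $G$), we conclude that $\dot{G}(x)=0$ and $x\in\mathscr{S}(x^{\ast})\cap\mathbb{R}^{n}_{>0}$ imply $x=x^{\ast}$.

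Finally I would invoke the standard Lyapunov stability theorem on the invariant manifold $\mathscr{S}(x^{\ast})\cap\mathbb{R}^{n}_{>0}$: a sufficiently small neighbourhood of $x^{\ast}$ in this class stays inside the positive orthant (so $G$ is smooth there), $G$ is positive definite with respect to $x^{\ast}$, and $\dot{G}$ is negative except at $x^{\ast}$, yielding local asymptotic stability of $x^{\ast}$ relative to $\mathscr{S}(x^{\ast})\cap\mathbb{R}^{n}_{>0}$. The main obstacle is the last step of the previous paragraph, namely establishing that the zero set of $\dot{G}$ intersects the compatibility class only at $x^{\ast}$; this is the pinch point where the geometric content of complex balance (the vanishing of coefficients in the complex-indexed sum) must be coupled with the strict convexity/Birch argument, and everything else is essentially bookkeeping.
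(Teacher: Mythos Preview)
Your proposal is correct and follows the classical Horn--Jackson argument. The paper does not actually give its own proof of this theorem: it cites the result from \cite{Horn72} and merely indicates that the proof is carried out via the pseudo-Helmholtz function $G(x)=\sum_{i=1}^{n}(x^*_{i}-x_{i}-x_{i}\ln(x^*_{i}/x_{i}))$, which is precisely the Lyapunov function you use. Your outline---strict convexity of $G$, the complex-indexed dissipation inequality combined with $Bv(x^{\ast})=\mathbbold{0}_{c}$, and the Birch-type uniqueness argument on the compatibility class---is exactly the standard route, so there is nothing to compare.
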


The proof of the above theorem is carried through with the following well-known Lyapunov function
\begin{equation}\label{Gibbs}
G(x)=\sum_{i=1}^{n}(x^*_{i}-x_{i}-x_{i}\text{ln}\frac{x^*_{i}}{x_{i}}), \quad x\in \mathbb{R}^{n}_{> 0}
\end{equation}
which is referred to as {the pseudo-Helmholtz function} \cite{Horn72} in the context.

\begin{remark}
Note that the domain of {the pseudo-Helmholtz function} $G(x)$ is $\mathbb{R}^{n}_{> 0}$, which seems not completely consistent with the nonnegative stoichiometric compatibility class of the equilibrium, i.e., $\mathscr{S}(x^{\ast})\cap\mathbb{R}^{n}_{\geq 0}$, the domain of the complex balanced MAS. {Actually, the domain of the complex balanced MAS can shrink to the positive stoichiometric compatibility class of the equilibrium, $\mathscr{S}(x^{\ast})\cap\mathbb{R}^{n}_{\textgreater 0}$, as long as the initial point is selected carefully\mbox{\cite{Feinberg1995}}. The reason is that $\lim_{x\to\mathbbold{0}_n} G(x)=\sum_{i=1}^n x^*_i$ and $\frac{dG(x)}{dt}\leq 0$, as long as the energy value at an initial point, evaluated by $G(x)$, is lower than $\lim_{x\to\mathbbold{0}_n} G(x)=\sum_{i=1}^n x^*_i$, then the trajectory will not run towards zero point.} A feasible initial points set is $\{x_0|G(x_0)\textless\lim_{x\to\mathbbold{0}_n} G(x)\}\cap\mathscr{S}(x^{\ast})\cap\mathbb{R}^{n}_{\textgreater 0}$.
\end{remark}

\subsection{Motivation statement}
The {pseudo-Helmholtz function} $G(x)$ exhibits high efficacy in characterizing the asymptotic stability of a complex balanced equilibrium, {but it is not in general Lyapunov function for systems that are not complex balanced.} In order to reach the latter purpose, it is necessary to develop new Lyapunov functions. Due to high relevance between complex balanced equilibria and general ones (the {former} is a subset of the latter), a possible solution to find the new Lyapunov function is to extend {the pseudo-Helmholtz function} to a more general form. A naive extension is to multiply every term in the function $G(x)$ by a positive number $d_i,~i=1,\cdots,n$, then we get a new function
\begin{equation}\label{newGibbs}
G_e(x)=\sum_{i=1}^{n}d_i(x^*_{i}-x_{i}-x_{i}\text{ln}\frac{x^*_{i}}{x_{i}}), \quad x\in\mathbb{R}^{n}_{> 0},d_i\in\mathbb{R}_{\textgreater 0}.
\end{equation}
Clearly, like $G(x)$, this extension also has a strict minimum 0 at $x^*$, which indicates that $G_e(x)$ satisfies the necessary condition to behave as a Lyapunov function, and is thus potential for stability analysis for some MASs. Following this superficial motivation, we will discuss a more general extension of {the pseudo-Helmholtz function} so that more MASs may be served, and analyze its properties from viewpoint of becoming the Lyapunov function in the following.

\section{Generalized pseudo-Helmholtz function}\label{sec3}
A more general extension of {the pseudo-Helmholtz function} is to further set the number of terms in $G(x)$ not fixed. This leads to the definition of the generalized {pseudo-Helmholtz function}.

\begin{definition}[Generalized {pseudo-Helmholtz function}]
Let $x^{\ast}\in \mathbb{R}^{n}_{> 0}$ be an equilibrium of a MAS $(\mathcal{X,C,R,K})$ governed by \cref{CRN-dynamics0}. The function $\tilde{G}:\mathbb{R}^n_{\textgreater 0}\mapsto\mathbb{R}_{\geq 0}$ given by
\begin{equation}\label{generalized Gibbs0}
  \tilde{G}(x)=\sum_{i=\tau_1}^{\tau_p} d_{i}(x^*_{i}-x_{i}-x_{i}\ln\frac{x^*_{i}}{x_{i}})
\end{equation}
is called the generalized {pseudo-Helmholtz function}, where $d_i\textgreater 0$, {$\tau_1\leq\cdots\leq\tau_p$} and $\{\tau_1,\cdots,\tau_p\}\subseteq\mathcal{I}=\{1,\cdots,n\}$.
\end{definition}

\begin{remark}\label{remarkGibbs}
For any $x\in\mathbb{R}^n_{\textgreater 0}$, the generalized {pseudo-Helmholtz function} satisfies $\tilde{G}(x)\geq 0$ with equality hold if and only if $(x_{\tau_1},\cdots,x_{\tau_p})^\top=(x_{\tau_1}^*,\cdots,x_{\tau_p}^*)^\top$.
\end{remark}

{For simplicity but without loss of generality, we set $\{\tau_1,\cdots,\tau_p\}=\{1,\cdots,p\}$ in \mbox{(\ref{generalized Gibbs})} for the subsequent analysis}, i.e., keeping the first $p$ terms of $G_e(x)$. As a result,
\begin{equation}\label{generalized Gibbs}
  \tilde{G}(x)=\sum_{i=1}^{p} d_{i}(x^*_{i}-x_{i}-x_{i}\ln\frac{x^*_{i}}{x_{i}}).
\end{equation}
Denote $(x_1,\cdots,x_p)^\top=x_{\bot}$ and $(x_{p+1},\cdots,x_n)^\top=x_{\top}$, then $\forall~x_{\top}\in\mathbb{R}^{n-p}_{\textgreater 0}$ the function $\tilde{G}(x)$ {attains the minimum} $0$ at $x=({x_{\bot}^{\ast}}^\top,x_{\top}^\top)^\top$. This implies that the equilibrium
\begin{equation*}\label{with equality for x}
x^*=\begin{bmatrix}
x_{\bot}^{\ast} \\
x_{\top}^{\ast}
\end{bmatrix}
\end{equation*}
is not the strict minimum point for the generalized {pseudo-Helmholtz function}. At this point, $\tilde{G}(x)$ seems not able to behave as a Lyapunov function. {However, if the degree of freedom of the concerning MAS is $p$, and the remaining $n-p$ state variables may be determined uniquely by those free $p$ variables, then $\tilde{G}(x)$ {may be} a Lyapunov function.} We analyze the number of non-free variables in a MAS $(\mathcal{X,C,R,K})$ through considering the law of mass conservation.

\begin{definition}[Mass conservation law \cite{Horn72}]
{The law of conservation of mass states that mass in an isolated system is neither created nor destroyed by chemical reactions or physical transformations. For a MAS $(\mathcal{X,C,R,K})$ governed by \mbox{\cref{CRN-dynamics0}}, this means that there exists a $n$-dimensional positive vector $\rho\in \mathbb{R}^{n}_{> 0}$ such that $\rho^{\top}S=\mathbbold{0}_{r}^{\top}$, and $\rho$ is called a conserved vector.}
\end{definition}

\begin{remark}\label{ConservedVector}
{Clearly, $\rho\in\mathrm{Ker}(S^{\top})$. Moreover, for a mass-conserved MAS, there must exist a conserved vector, the most simple candidate for which is the vector $M$ of molecular weight $M_i$ of the species $X_i$, $i=1,\cdots,n$.}
\end{remark}

We then define an important concept, conserved matrix, that will play a key role on the subsequent analysis.

\begin{definition}[Conserved Matrix]\label{defConservedMatrix}
{Consider a mass-conserved MAS $(\mathcal{X,C,R,K})$ governed by \mbox{\cref{CRN-dynamics0}} with $\mathrm{rank}(S)=s$. Assume that there exist $q$ linearly independent vectors $\xi_m\in\mathbb{R}^n_{> 0}~(m=1,\cdots,q)$ in $\mathrm{Ker}(S^{\top})$, where $1\leq q\leq\mathrm{dim}\left(\mathrm{Ker}(S^{\top})\right)=n-s$, then the matrix $C\in\mathbb{R}^{n\times q}_{\textgreater 0}$ equipped with $\xi_m$ as column vectors is called a conserved matrix for $(\mathcal{X,C,R,K})$.}
\end{definition}

\begin{remark}
For a mass-conserved MAS, the conserved matrix is not unique, but all of them are full column rank. Under the influence of the same conserved matrix $C$, the state of the MAS in $\mathscr{S}(x^*)\cap\mathbb{R}^{n}_{\geq 0}$ is constrained by this matrix and the equilibrium point $x^*$ according to $C^{\top}x=C^{\top}x^*$. For a MAS not admitting the mass conservation law, the conserved matrix does not exist. In this case, $q=0$.
\end{remark}

Based on the conserved matrix, we can be aware of the number of non-free variables in a mass-conserved MAS.

\begin{theorem}\label{DegreeofFreedom}
For a {mass-conserved} MAS $(\mathcal{X,C,R,K})$ described by \cref{CRN-dynamics0}, $C\in\mathbb{R}^{n\times q}_{\textgreater 0}$ $(q\textgreater 0)$ is a conserved matrix and $x^*$ is an equilibrium point, then in $\mathscr{S}(x^*)\cap\mathbb{R}^{n}_{\geq 0}$ there are {at least} {$q$ non-free state variables, and moreover, these variables are uniquely determined by other $n-q$ state variables and $x^*$ but independent of the selection of $C$.}
\end{theorem}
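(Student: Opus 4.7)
The plan is to extract the claimed dependence directly from the conservation identity $C^{\top}x = C^{\top}x^{\ast}$ and then argue that the resulting relation between free and non-free variables is intrinsic to the compatibility class $\mathscr{S}(x^{\ast})$ rather than a feature of the particular conserved matrix exhibiting it.

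First, I would invoke the definition of conserved matrix to conclude $C^{\top}S=\mathbbold{0}$ (each column of $C$ lies in $\mathrm{Ker}(S^{\top})$). Integrating the dynamics \cref{CRN-dynamics0} as in \cref{statepoint} and premultiplying by $C^{\top}$ shows that $C^{\top}x(t)=C^{\top}x(0)$ along every trajectory, so every $x\in\mathscr{S}(x^{\ast})\cap\mathbb{R}^{n}_{\geq 0}$ satisfies $C^{\top}x=C^{\top}x^{\ast}$, as already observed in the remark after \cref{defConservedMatrix}. This furnishes $q$ linearly independent scalar constraints on $x$.

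Second, since $\mathrm{rank}(C)=q$, the matrix $C^{\top}\in\mathbb{R}^{q\times n}$ contains $q$ linearly independent columns. After a harmless relabeling of species, assume they are the first $q$, and split $C=\bigl[\begin{smallmatrix}C_{1}\\ C_{2}\end{smallmatrix}\bigr]$ with $C_{1}\in\mathbb{R}^{q\times q}$ invertible and $C_{2}\in\mathbb{R}^{(n-q)\times q}$. Partitioning $x=(x_{D}^{\top},x_{F}^{\top})^{\top}$ correspondingly, the conservation identity becomes $C_{1}^{\top}(x_{D}-x_{D}^{\ast}) = -C_{2}^{\top}(x_{F}-x_{F}^{\ast})$, and inverting yields $x_{D}=x_{D}^{\ast}-(C_{1}^{\top})^{-1}C_{2}^{\top}(x_{F}-x_{F}^{\ast})$. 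This exhibits $q$ components of $x$ as explicit functions of the remaining $n-q$ components together with $x^{\ast}$, giving the ``at least $q$'' count.

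Third, for independence from the choice of conserved matrix, let $C'$ be any other admissible conserved matrix of rank $q$ whose first $q$ rows are also linearly independent, so that the same indices are available as non-free. Rerunning the derivation with $C'$ in place of $C$ produces $x_{D}=x_{D}^{\ast}-((C'_{1})^{\top})^{-1}(C'_{2})^{\top}(x_{F}-x_{F}^{\ast})$. Since both identities must hold simultaneously for every $x\in\mathscr{S}(x^{\ast})\cap\mathbb{R}^{n}_{\geq 0}$, the value of $x_{D}$ they produce on the compatibility class coincides, which is the precise sense in which the non-free variables are independent of the particular conserved matrix used.

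The main obstacle I expect is the last step. Because different conserved matrices generally span different subspaces of $\mathrm{Ker}(S^{\top})$, the linear maps $(C_{1}^{\top})^{-1}C_{2}^{\top}$ and $((C'_{1})^{\top})^{-1}(C'_{2})^{\top}$ need not agree as maps on all of $\mathbb{R}^{n-q}$; they agree only when applied to differences $x_{F}-x_{F}^{\ast}$ arising from actual points of $\mathscr{S}(x^{\ast})$, namely on the projection of $\mathrm{Im}(S)$ onto the last $n-q$ coordinates. The cleanest formulation is therefore pointwise on $\mathscr{S}(x^{\ast})$ rather than as an algebraic identity on all of $\mathbb{R}^{n-q}$, and some care is also needed to ensure that the index relabeling of the second step can be chosen simultaneously admissibly for the matrices being compared.
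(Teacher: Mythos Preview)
Your proposal is correct and follows essentially the same route as the paper: both invoke $C^{\top}x=C^{\top}x^{\ast}$, split $C^{\top}$ into an invertible $q\times q$ block and a remainder, solve for the dependent coordinates, and then compare the formulas obtained from two conserved matrices to conclude independence of the choice of $C$. The only cosmetic difference is that the paper designates the \emph{last} $q$ coordinates as non-free (consistent with its later $x_{\bot},x_{\top},C_{l},C_{r}$ notation) rather than the first $q$, and it does not spell out the caveats you raise in your final paragraph---it simply asserts that the two expressions for the dependent block coincide on $\mathscr{S}(x^{\ast})$.
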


\begin{proof}
{For this mass-conserved MAS, $\forall x\in\mathscr{S}(x^*)\cap\mathbb{R}^{n}_{\geq 0}$, we have $C^{\top}x=C^{\top}x^*$. Note that rank$(C)=q$ ($1\leq q\leq\mathrm{dim}\left(\mathrm{Ker}(S^{\top})\right)$), so there exist {at least} $q$ non-free state variables among all $n$ variables contained in the MAS. For simplicity but without loss of generality, the last $q$ state variables from $x_{n-q+1}$ to $x_{n}$ are thought as {non-free variables}, then we get}
\begin{equation}\label{mass conservtion relation 1}
  C^{\top}x=\begin{bmatrix}
  C_{l}^{\top} & C_{r}^{\top}
  \end{bmatrix}\begin{bmatrix}
  x_{\bot} \\
  x_{\top}
  \end{bmatrix}
  =C_{l}^{\top}x_{\bot}+C_{r}^{\top}x_{\top}
  =C^{\top}x^*,
\end{equation}
where $C_{l}^{\top}\in \mathbb{R}^{q\times (n-q)}_{> 0}$, $C_{r}^{\top}\in \mathbb{R}^{q\times q}_{> 0}$ are two sub-blocks of $C^\top$, $x_\bot=(x_1,\cdots,x_{n-q})^\top$ and $x_\top=(x_{n-q+1},\cdots,x_n)^\top$. Since $C$ is full column rank, $C_{r}^{\top}$ is invertible. {Also, utilizing $C^{\top}x^*=C_{l}^{\top}x_{\bot}^*+C_{r}^{\top}x_{\top}^*$}, the last equality of \cref{mass conservtion relation 1} may be rewritten as
\begin{equation}\label{Dependence}
x_{\top}=C_{r}^{-\top}C^{\top}_l(x^*_{\bot}-x_{\bot})+x_{\top}^*,
\end{equation}
which indicates that $x_\top$ is uniquely determined by $C$, $x^*$ and $x_\bot$.

Since the conserved matrix $C$ is not unique for the MAS, we need to further prove the above equality independent of $C$. {Assume that $C'\in\mathbb{R}^{n\times q}_{\textgreater 0}$ is another conserved matrix for the MAS under consideration, which has the same dimension as $C$. Likewise, we utilize the conservation relation ${C'}^\top x={C'}^\top x^*$, and further partition $C'$ according to the same pattern as $C$, then we obtain} $${x_{\top}={C'}_{r}^{-\top}{C'}_{l}^{\top}(x_{\bot}^*-x_{\bot})+x_{\top}^*.}$$ {By combining it with \mbox{\cref{Dependence}}, we get $C_{r}^{-\top}C^{\top}_l(x_{\bot}^*-x_{\bot})={C'}_{r}^{-\top}{C'}_{l}^{\top}(x_{\bot}^*-x_{\bot}).$ This means that whatever the conserved matrix is, $C_{r}^{-\top}C^{\top}_l(x_{\bot}^*-x_{\bot})$ is always equal to ${C'}_{r}^{-\top}{C'}_{l}^{\top}(x_{\bot}^*-x_{\bot})$.} Therefore, $x_\top$ is uniquely determined by $x^*$ and $x_{\bot}$ but independent of the selection of $C$, which completes the proof.
\end{proof}

Based on this theorem, we can thus address the issue that the generalized {pseudo-Helmholtz function} has a strict minimum $0$ at $x^*$ through setting $p=n-q$ in \cref{generalized Gibbs}.

\begin{theorem}\label{theoremGibbs}
For a {mass-conserved} MAS $(\mathcal{X,C,R,K})$ captured by \cref{CRN-dynamics0}, let $x^*$ be an equilibrium,  and $C\in\mathbb{R}^{n\times q}_{\textgreater 0}$ be a conserved matrix. Then the generalized {pseudo-Helmholtz function} given by
\begin{equation}\label{generalized Gibbs1gao}
\tilde{G}(x)=\sum_{i=1}^{n-q} d_{i}(x^*_{i}-x_{i}-x_{i}\ln\frac{x^*_{i}}{x_{i}}),\quad {\forall x\in\mathbb{R}^n_{\textgreater 0}}~{and~\forall d_i\textgreater 0,}
\end{equation}
has a strict minimum at $x^*$ {within $\mathscr{S}(x^*)\cap\mathbb{R}^n_{\textgreater 0}$}, while
\begin{equation}\label{partialGibbs}
\nabla\tilde{G}(x)=\mathrm{diag}(d_1,\cdots,d_{n-q},0,\cdots,0)\nabla G(x).
\end{equation}
\end{theorem}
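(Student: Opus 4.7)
The plan is to verify the two claims in the theorem separately, since they rely on distinct ingredients. The gradient identity is a routine term-by-term differentiation, while the strict-minimum assertion requires combining the pointwise behavior of $\tilde{G}(x)$ noted in Remark~\ref{remarkGibbs} with the conservation constraint developed in Theorem~\ref{DegreeofFreedom}.

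First I would establish the gradient identity. For each $i$, the $i$th summand of $G$ in \cref{Gibbs} is $x_i^*-x_i-x_i\ln(x_i^*/x_i)$, whose derivative with respect to $x_i$ is $\ln(x_i/x_i^*)$, so $(\nabla G(x))_i=\ln(x_i/x_i^*)$. Since $\tilde{G}$ keeps only the first $n-q$ summands of $G$, rescaled by the positive weights $d_i$, and drops the rest, its partial derivatives are $(\nabla\tilde{G}(x))_i=d_i\ln(x_i/x_i^*)$ for $i\leq n-q$ and $0$ for $i>n-q$. Assembling these partials yields exactly $\nabla\tilde{G}(x)=\mathrm{diag}(d_1,\ldots,d_{n-q},0,\ldots,0)\nabla G(x)$.

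For the strict-minimum statement, set $x_\bot=(x_1,\ldots,x_{n-q})^\top$ and $x_\top=(x_{n-q+1},\ldots,x_n)^\top$, and similarly for $x^*$. Remark~\ref{remarkGibbs} (specialized to $p=n-q$) already gives $\tilde{G}(x)\geq 0$ on $\mathbb{R}^n_{>0}$ with equality if and only if $x_\bot=x_\bot^*$. Viewed on all of $\mathbb{R}^n_{>0}$ this yields only a (non-strict) minimum, because $\tilde{G}$ is flat in the $x_\top$-directions. The crucial step is to restrict to $\mathscr{S}(x^*)\cap\mathbb{R}^n_{>0}$: by Theorem~\ref{DegreeofFreedom}, with the partition of free/non-free variables aligned exactly with the one used in $\tilde{G}$, on this set the coordinates $x_\top$ are uniquely determined by $x_\bot$ and $x^*$ through \cref{Dependence}. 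Consequently $x_\bot=x_\bot^*$ forces $x_\top=x_\top^*$, so $x=x^*$ is the unique minimizer within the positive stoichiometric compatibility class.

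The main obstacle is not computational but conceptual: one must ensure that the dimension count in the hypothesis is tight, i.e.\ the number of coordinates retained in the generalized Helmholtz sum must equal $n-q$ so that the degenerate directions of $\tilde{G}$ coincide precisely with the directions pinned down by the conservation law. Once this matching is observed, the proof is a short synthesis of the pointwise inequality of Remark~\ref{remarkGibbs}, the uniqueness clause of Theorem~\ref{DegreeofFreedom}, and the elementary derivative computation above.
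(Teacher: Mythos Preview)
Your proposal is correct and follows essentially the same route as the paper: invoke Remark~\ref{remarkGibbs} for $\tilde{G}(x)\geq 0$ with equality iff $x_\bot=x_\bot^*$, then use Theorem~\ref{DegreeofFreedom} to force $x_\top=x_\top^*$ on $\mathscr{S}(x^*)\cap\mathbb{R}^n_{>0}$, and verify the gradient identity by direct differentiation. Your explicit computation of the partial derivatives is more detailed than the paper's ``straightforward'' remark, but the logic is identical.
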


\begin{proof}
From \cref{remarkGibbs}, we have $\tilde{G}(x)\geq 0$ with equality hold if $x_\bot=x_\bot^*\in\mathbb{R}^{n-q}_{\textgreater 0}$ and $x_\top\in\mathbb{R}^q_{\textgreater 0}$ may be arbitrary in its domain. Also from \cref{DegreeofFreedom}, we get that $x_\top$ is uniquely determined by $x_\bot^*$ in $\mathscr{S}(x^*)\cap\mathbb{R}^n_{\textgreater 0}$, i.e., $x_\top=x_\top^*$. This means that $\tilde{G}(x)$ has a strict minimum at $x^*$ in $\mathscr{S}(x^*)\cap\mathbb{R}^n_{\textgreater 0}$. It is straightforward to obtain \cref{partialGibbs} if $\tilde{G}(x)$ and $G(x)$ in \cref{Gibbs} are both taken partial derivative with respect to $x$.
\end{proof}

\begin{remark}
The more the linearly independent conserved vectors are for a {mass-conserved} MAS, the more sparse the diagonal matrix $\mathrm{diag}(d_1,\cdots,d_{n-q},0,\cdots,0)$ is, which implies that the dynamical equation \cref{CRN-dynamics0} of this MAS is more reduced. However, it is extremely difficult for a MAS to find a maximum linear independence group of all conserved vectors, i.e., maximizing $q$. {In fact, this is not necessary, since a portion of linear independence group, such as a conserved vector composed of the molecular weight of all species, is enough to work as a conserved matrix to derive nice properties of the generalized pseudo-Helmholtz function, like strict convexity at equilibrium.}
\end{remark}

\begin{remark}
If the MAS does not support the mass conservation law, the generalized {pseudo-Helmholtz function} reduce to the naive extension of the {pseudo-Helmholtz function}, i.e., $G_e(x)$ in \cref{newGibbs} which, obviously, can behave as a Lyapunov function.
\end{remark}

The generalized pseudo-Helmholtz function, {as discussed in \mbox{\cref{remarkGibbs}}, is nonnegative} and has lower bound only at the equilibrium, so it may be a Lyapunov function. However, it is still needed the time derivative of $\tilde{G}(x)$ to follow $\dot{\tilde{G}}(x)\leq 0$, i.e., we should have
\begin{equation}\label{GeneralizedGibbsDerivative}
\begin{aligned}
\dot{\tilde{G}}(x)&=\nabla^\top\tilde{G}(x)Sv(x) \\
&=\nabla^\top G(x)\mathrm{diag}(d_1,\cdots,d_{n-q},0,\cdots,0)Sv(x) \\
&\leq 0.
\end{aligned}
\end{equation}
where the second equality is based on \cref{partialGibbs}. As one might know, $G(x)$ is {an} available Lyapunov function for complex balanced MASs, and its time derivative follows
$\dot{G}(x)=\nabla^\top G(x)Sv(x)$. However, in the result of \cref{GeneralizedGibbsDerivative} there exists a diagonal matrix between $\nabla^\top G(x)$ and $Sv(x)$. In order to fully utilize the performance of $G(x)$ as a Lyapunov function, we manage to eliminate the diagonal matrix $\mathrm{diag}(d_1,\cdots,d_{n-q},0,\cdots,0)$ through defining reconstruction notation in the next section.

\section{Reconstruction}\label{sec4}
In this section, the reconstruction notation will be defined for characterizing stability of equilibria in some MASs.

\subsection{Definition}
The reconstruction concept has some relevance to the realization notation proposed in \cite{Szederkenyi2011a}. We firstly revisit the realization definition.

\begin{definition}[Realization]\label{def:6}
Given a nonnegative autonomous polynomial system $\dot{x}=f(x)$ with $f:\mathbb{R}^n_{> 0}\rightarrow\mathbb{R}^n$ to be locally Lipschitz, if there
exists a MAS $(\mathcal{X,C,R,K})$ modeled by $\dot{x}=Sv(x)$ such that
$f(x)=Sv(x)$, then the autonomous polynomial system is kinetically realizable, and
the MAS $(\mathcal{X,C,R,K})$ is termed as a kinetic realization, or
realization for short, of this polynomial system.
\end{definition}

%

We then give the definition of reconstruction following that of realization.

\begin{definition}\label{def:7}
For a {mass-conserved} MAS $(\mathcal{X,C,R,K})$ described by $\dot{x}=Sv(x)$, {let $x^*\in\mathbb{R}^n_{\textgreater 0}$ be an equilibrium and} $C\in\mathbb{R}^{n\times q}_{\textgreater 0}$ be a conserved matrix. If there exists a positive diagonal matrix $D_1=\mathrm{diag}(d_1,\cdots,d_{n-q})$ and a MAS $(\mathcal{\hat{X},\hat{C},\hat{R},\hat{K}})$ given by $\dot{\hat{x}}=\hat{S}\hat{v}(\hat{x})$ (the parameters follow the same symbols as for $(\mathcal{X,C,R,K})$ but with a hat above) such that
\begin{equation}\label{ReconstructionDefinition1}
{DSv(x)} =\begin{bmatrix}
{\hat{S}\hat{v}(\hat{x})} \\
{\mathbbold{0}_{q}}
\end{bmatrix},
\end{equation}
where
\begin{equation}\label{reconstruction matrix}
  D=\begin{bmatrix}
  D_{1} & \mathbbold{0}_{(n-q)\times q}\\
  C_{l}^{\top} & C_{r}^{\top}
  \end{bmatrix},~{\hat{x}=x_{\bot}\in\mathbb{R}^{n-q}_{\geq 0}},~\hat{S}\in\mathbb{Z}^{(n-q)\times\hat{r}} ~\mathrm{and} ~\hat{v}:\mathbb{R}^{n-q}_{\geq 0}\to\mathbb{R}^{\hat{r}}_{\geq 0},
\end{equation}
then $(\mathcal{\hat{X},\hat{C},\hat{R},\hat{K}})$ is called a reconstruction of $(\mathcal{X,C,R,K})$, and $D$ is termed as the reconstructing matrix from $(\mathcal{X,C,R,K})$ to $(\mathcal{\hat{X},\hat{C},\hat{R},\hat{K}})$.
\end{definition}

{Note that the state variables $\hat{x}$ in the reconstructed system is actually the projection of $x$ onto the first $n-q$ coordinations, and \mbox{\cref{ReconstructionDefinition1}} holds for all $x$ in $\mathscr{S}(x^*)\cap\mathbb{R}^n_{\geq 0}$. Some examples of reconstructions are given in the subsequent \mbox{\cref{table:1}}.}


\begin{remark}\label{rem:3}
The relation \cref{ReconstructionDefinition1} can be equivalently expressed as
\begin{equation}\label{ReconstructionDefinition2}
D_1S_1v(x)=\hat{S}\hat{v}(\hat{x}),
\end{equation}
where $S_1\in\mathbb{Z}^{(n-q)\times r}$ comes from $S=(S_1^\top,S_2^\top)^\top$. In addition, since $D_1$ and $C_r^\top$ are both invertible, the reconstructing matrix $D$ is invertible with the inverse matrix to be
\begin{equation}\label{inverse matrix of reconstruction matrix}
  D^{-1}=\begin{bmatrix}
  D_{1}^{-1} & \mathbbold{0}_{(n-q)\times q}\\
  -C_{r}^{-\top}C_{l}^{\top}D_{1}^{-1} & C_{r}^{-\top}
  \end{bmatrix}.
\end{equation}
The nonsingular property of $D$ is quite important that will be used to eliminate the diagonal matrix $\mathrm{diag}(d_1,\cdots,d_{n-q},0,\cdots,0)$ in the result of $\dot{\tilde{G}}(x)$ in \cref{GeneralizedGibbsDerivative}.
\end{remark}

\begin{remark}\label{LinearConjugacyComparison}
If the MAS $(\mathcal{X,C,R,K})$ dose not support the mass conservation law, i.e., $q=0$, then $D=D_1=\mathrm{diag}(d_1,\cdots,d_n)$. In this case, the reconstruction means to make a positive diagonal transformation from the right hand side of the differential equation $\dot{x}=Sv(x)$ to that of $\dot{\hat{x}}=\hat{S}\hat{v}(\hat{x})$. At this time, the reconstruction is the same with the well-known linear conjugacy concept \cite{Johnston2011,Johnston2012,Johnston2013} which defines a linear transformation $\hat{x}=Dx$, and then by substituting $x=D^{-1}\hat{x}$ into $\dot{x}=Sv(x)$ and dynamical equivalence to $\dot{\hat{x}}=\hat{S}\hat{v}(\hat{x})$ produces the relation $DSv(D^{-1}\hat{x})=\hat{S}\hat{v}(\hat{x})$. Further, if $\forall i,~d_i=1$, then the reconstruction will have the same effect as the realization.
\end{remark}

From \cref{def:7}, apparently, for a MAS even though under the same reconstructing matrix, there may exist a few parametrically and/or structurally different reconstructions that are dynamically equivalent. This property is the same as in the realization concept. However, unlike realization the notion of reconstruction only characterizes a binary relation between MASs, but not between the MAS and a general polynomial system. Therefore, there are at least two MASs involved when we mention reconstruction. The whole process seems to reconstruct a new MAS from a known one. The reconstruction notation together with the generalized {pseudo-Helmholtz function} will play an important role on capturing the stability of some MASs.

\subsection{Reconstruction for characterizing stability}
In this subsetion, we will characterize stability of equilibria in MASs utilizing the reconstruction strategy.

\begin{lemma}\label{LemmaEquilibrium}
Assume that $(\mathcal{\hat{X},\hat{C},\hat{R},\hat{K}})$ is a reconstruction of a {mass-conserved} MAS $(\mathcal{X,C,R,K})$ under the reconstructing matrix $D$ given by \cref{reconstruction matrix}, and their dynamical equations are $\dot{\hat{x}}=\hat{S}\hat{v}(\hat{x})$ and \cref{CRN-dynamics0}, respectively. {If $x^*\in\mathbb{R}^n_{> 0}$ is an equilibrium in $(\mathcal{X,C,R,K})$, then $\hat{x}^*=x_{\bot}^*\in\mathbb{R}^{n-q}_{> 0}$ is an equilibrium in $(\mathcal{\hat{X},\hat{C},\hat{R},\hat{K}})$.}
\end{lemma}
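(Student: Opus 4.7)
The plan is to simply evaluate the defining reconstruction identity \cref{ReconstructionDefinition1} at $x=x^*$ and read off what it says about $\hat{x}^* = x_{\bot}^*$. Since $x^*$ is an equilibrium of $(\mathcal{X},\mathcal{C},\mathcal{R},\mathcal{K})$ we have $Sv(x^*) = \mathbbold{0}_n$ by \cref{def:4}, and therefore $DSv(x^*) = \mathbbold{0}_n$ regardless of what $D$ is.

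Plugging this into the right-hand side of \cref{ReconstructionDefinition1} gives
\begin{equation*}
\begin{bmatrix} \hat{S}\hat{v}(\hat{x}^*) \\ \mathbbold{0}_q \end{bmatrix} = \mathbbold{0}_n,
\end{equation*}
and reading off the top $n-q$ coordinates yields $\hat{S}\hat{v}(\hat{x}^*) = \mathbbold{0}_{n-q}$, which is exactly the equilibrium condition for the reconstructed system $\dot{\hat{x}} = \hat{S}\hat{v}(\hat{x})$. Positivity $\hat{x}^* \in \mathbb{R}^{n-q}_{>0}$ follows immediately from $x^* \in \mathbb{R}^n_{>0}$, since $\hat{x}^*$ is just the projection of $x^*$ onto its first $n-q$ entries.

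There is essentially no obstacle here: the claim is a one-line consequence of the reconstruction identity, and the invertibility of $D$ recorded in \cref{inverse matrix of reconstruction matrix} is not even needed for this direction. (Invertibility would matter if one wished to lift equilibria in the other direction, from the reconstruction back to the original network.) The only thing to be careful about is to note explicitly that \cref{ReconstructionDefinition1} holds pointwise in $\mathscr{S}(x^*)\cap\mathbb{R}^n_{\geq 0}$, so in particular at the point $x^*$ itself, and that the identification $\hat{x}=x_{\bot}$ built into \cref{reconstruction matrix} then forces $\hat{x}^* = x_{\bot}^*$.
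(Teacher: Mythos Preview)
Your proof is correct and takes essentially the same approach as the paper. The only cosmetic difference is that the paper invokes the equivalent reduced form \cref{ReconstructionDefinition2}, i.e.\ $D_1 S_1 v(x)=\hat{S}\hat{v}(\hat{x})$, rather than the full block identity \cref{ReconstructionDefinition1}, but the argument is identical.
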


\begin{proof}
{Since $x^*\in\mathbb{R}^n_{> 0}$ is an equilibrium in $(\mathcal{X,C,R,K})$, we get $Sv(x^*)=\mathbbold{0}_n$, i.e., $D_1S_1v(x^*)=\mathbbold{0}_{n-q}$.  From \mbox{\cref{ReconstructionDefinition2}}, we have $D_1S_1v(x^*)=\hat{S}\hat{v}(x_{\bot}^*)$. Therefore, $\hat{x}^*=x_{\bot}^*$ is an equilibrium in $(\mathcal{\hat{X},\hat{C},\hat{R},\hat{K}})$.}
\end{proof}

\begin{theorem}\label{thm:3}
For a {mass-conserved} MAS $(\mathcal{X,C,R,K})$ modeled by \cref{CRN-dynamics0} with an equilibrium $x^*\in\mathbb{R}^n_{> 0}$, assume that the MAS $(\mathcal{\hat{X},\hat{C},\hat{R},\hat{K}})$  in the form of $\dot{\hat{x}}=\hat{S}\hat{v}(\hat{x})$ is its reconstruction, bridged by the reconstructing matrix $D$ defined in \cref{reconstruction matrix}, then $\hat{x}^*\in\mathbb{R}^{n-q}_{> 0}$ is an equilibrium in $(\mathcal{\hat{X},\hat{C},\hat{R},\hat{K}})$. If $\hat{x}^*$ is stable rendered by a Lyapunov function $\hat{V}:\mathbb{R}_{\geq 0}^{n-q}\to\mathbb{R}_{\geq 0}$ with $\hat{V}\in\mathscr{C}^2$ and its Hessian matrix to be a $(n-q)$-dimensional positive diagonal matrix, then $x^*$ is stable in $(\mathcal{X,C,R,K})$.
\end{theorem}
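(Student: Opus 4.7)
The plan is to build a Lyapunov function $V$ on the original state space that inherits its negative-semidefinite time derivative directly from $\hat V$ via the reconstruction identity. The central observation is that the top block of \cref{ReconstructionDefinition1} reads $D_1S_1v(x)=\hat S\hat v(x_\bot)$, so if I can arrange $\nabla V(x)$ to equal $(D_1\nabla\hat V(x_\bot)^\top,\ \mathbbold{0}_q^\top)^\top$, then the diagonal factor $D_1$ cancels out and $\dot V(x)$ collapses to $\dot{\hat V}(x_\bot)\leq 0$.

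First I would exploit the hypothesis that $\nabla^2\hat V$ is diagonal. On the convex domain $\mathbb{R}^{n-q}_{\geq 0}$ this forces $\partial^2\hat V/\partial\hat x_i\partial\hat x_j\equiv 0$ for $i\neq j$, so $\hat V$ is additively separable:
\[
\hat V(\hat x)=\sum_{i=1}^{n-q} f_i(\hat x_i),\qquad f_i''>0.
\]
Because $\hat V$ attains its minimum at $\hat x^*=x_\bot^*$, each strictly convex $f_i$ is minimized at $x_i^*$ with $f_i'(x_i^*)=0$. I then define
\[
V(x)=\sum_{i=1}^{n-q} d_i\bigl(f_i(x_i)-f_i(x_i^*)\bigr),
\]
which satisfies $V(x^*)=0$ and $\nabla V(x)=\bigl(D_1\nabla\hat V(x_\bot)^\top,\ \mathbbold{0}_q^\top\bigr)^\top$.

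Next I would compute the time derivative along trajectories of \cref{CRN-dynamics0}:
\[
\dot V(x)=\nabla V(x)^\top S v(x)=\nabla\hat V(x_\bot)^\top D_1 S_1 v(x)=\nabla\hat V(x_\bot)^\top \hat S\hat v(x_\bot)=\dot{\hat V}(x_\bot)\leq 0,
\]
where the middle equality uses \cref{ReconstructionDefinition2} and the last inequality uses that $\hat V$ is a Lyapunov function for the reconstruction. For positive definiteness, strict convexity of $f_i$ gives $V(x)>0$ whenever $x_\bot\neq x_\bot^*$, and on the compatibility class \cref{DegreeofFreedom} forces $x_\top=x_\top^*$ as soon as $x_\bot=x_\bot^*$, so the only zero of $V$ on $\mathscr{S}(x^*)\cap\mathbb{R}^n_{>0}$ is $x^*$ itself. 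Lyapunov's direct method applied on the invariant set $\mathscr{S}(x^*)\cap\mathbb{R}^n_{>0}$ then yields stability of $x^*$.

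The main obstacle I anticipate is the mismatch between the natural domain of $V$, namely $\mathbb{R}^n_{>0}$, on which $V$ is merely positive semidefinite because it ignores the ``conserved'' coordinates $x_\top$, and the effective state space of the MAS, the lower-dimensional compatibility class on which positive definiteness is recovered. Handling this rigorously requires restricting throughout to $\mathscr{S}(x^*)\cap\mathbb{R}^n_{>0}$, where \cref{DegreeofFreedom} kills the ambiguity in $x_\top$ via the relation $x_\top=C_r^{-\top}C_l^\top(x_\bot^*-x_\bot)+x_\top^*$; care is also needed to pick the neighborhood of $x^*$ small enough that trajectories remain in $\mathbb{R}^n_{>0}$ so this conservation relation can be invoked, and to verify continuity of $V$ (which is immediate from $\hat V\in\mathscr{C}^2$) so that the standard stability argument applies without modification.
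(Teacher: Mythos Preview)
Your proposal is correct and follows essentially the same route as the paper: both construct the Lyapunov function $V(x)=\sum_{i=1}^{n-q}d_i\bigl(f_i(x_i)-f_i(x_i^*)\bigr)$ (the paper writes it as the line integral $\int_{x^*}^{x}\nabla^\top\hat V(\hat y)\,D_2\,dy$, which is the same object), show $\dot V(x)=\nabla^\top\hat V(x_\bot)D_1S_1v(x)\le 0$ via \cref{ReconstructionDefinition2}, and recover strict positivity on $\mathscr{S}(x^*)\cap\mathbb{R}^n_{>0}$ from \cref{DegreeofFreedom}. Your explicit use of additive separability (forced by the diagonal Hessian) is a mild streamlining over the paper's mean-value-theorem argument for positivity, but the underlying idea is identical.
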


\begin{proof}
According to \cref{LemmaEquilibrium}, $\hat{x}^*$ is naturally an equilibrium in $(\mathcal{\hat{X},\hat{C},\hat{R},\hat{K}})$.
As $\hat{V}\in\mathscr{C}^2$ is a Lyapunov function to render the system $(\mathcal{\hat{X},\hat{C},\hat{R},\hat{K}})$ stable at $\hat{x}^*$, we have $ \hat{V}(\hat{x})\geq 0$ with equality holding if and only if $\hat{x}=\hat{x}^*$, and moreover, $\nabla \hat{V}(\hat{x}^*)=\mathbbold{0}_{n-q}$ and {$\forall~\hat{y}\in\mathbb{R}^{n-q}_{\textgreater 0}$, if $\hat{y}\neq \hat{x}^*$, then $\nabla \hat{V}(\hat{y}) \neq \mathbbold{0}_{n-q}$ and $\partial \hat{V}(\hat{y}) / \partial \hat{x}_i$ is strictly increasing}. In addition,
\begin{equation}\label{stability of mass action system for theorem 3}
\begin{aligned}
\frac{d\hat{V}(\hat{x})}{dt}&=\nabla ^\top\hat{V}(\hat{x})\hat{S}\hat{v}(\hat{x})=\nabla ^\top\hat{V}(\hat{x})D_{1}S_{1}v(x)\leq 0,
\end{aligned}
\end{equation}
where $D_1$ and $S_1$ are given according to \cref{ReconstructionDefinition2}. Construct the following $\mathscr{C}^2$ function $V:\mathbb{R}^n_{\geq 0}\to\mathbb{R}_{\geq 0}$ as
\begin{equation}\label{LyaNrecon}
{V(x)=\int_{x^*}^x\nabla^\top\hat{V}(\hat{y})D_{2} dy,}
\end{equation}
{where $D_{2}=[D_{1},\mathbbold{0}_{(n-q)\times q}]$ and {$y\in\mathbb{R}^n_{\geq 0}$ with $\hat{y}$ to be the projection of $y$ onto the first $n-q$ coordinates}. Note that $x^*$ and $x$ are vectors, so the integral is over the line connecting these two points. }

It is clear that $V(x^*)=0$ and $\nabla V(x^*)=D_2^\top\nabla \hat{V}(\hat{x}^*)=\mathbbold{0}_n$.
Utilizing the mean value theorem of integrals, we can rewrite \cref{LyaNrecon} as
\begin{equation*}\label{LyaNrecon1}
{V(x)=\nabla^\top\hat{V}(\hat{\bar{x}})D_{2}(x-x^*)=\sum_{i=1}^{n-q}d_i\frac{\partial \hat{V}(\hat{\bar{x}})}{\partial \hat{x}_i}(x_i-x_i^*),}
\end{equation*}
where $\bar{x}\in\mathscr{S}(x^*)\cap\mathbb{R}^n_{\textgreater 0}$ lies between $x^*$ and $x$, {and $\hat{\bar{x}}$ is the projection of $\bar{x}$ onto the first $n-q$ coordinates. Since $\frac{\partial \hat{V}(\hat{\bar{x}})}{\partial \hat{x}_i}$ is strictly increasing and $\nabla \hat{V}(\hat{x}^*)=\mathbbold{0}_{n-q}$, we have $\frac{\partial \hat{V}(\hat{\bar{x}})}{\partial \hat{x}_i}(x_i-x_i^*)\geq 0$.} Therefore, $V(x)=0$ if and only if $x_\bot=x_\bot^*$, but $x_\top$ may be arbitrary, where $x_\bot=(x_1,\cdots,x_{n-q})^\top$ and $x_\top=(x_{n-q+1},\cdots,x_{n})^\top$. Also since $x_\top$ is uniquely determined by $x_\bot^*$ in $\mathscr{S}(x^*)\cap\mathbb{R}^n_{\textgreater 0}$, we have $V(x)=0$ iff $x=x^*$. Again from \cref{LyaNrecon} we can write the Hessian matrix of $V(x)$ to be
\begin{equation*}
H(V(x))=\begin{bmatrix}
  \mathrm{diag} \left(d_{i}\frac{\partial^{2}\hat{V}(\hat{x})}{\partial \hat{x}_{i}^{2}}\right) & \mathbbold{0}_{(n-q)\times q}\\
  \mathbbold{0}_{q \times (n-q)} & \mathbbold{0}_{q \times q}
  \end{bmatrix}, \quad i=1,\cdots,n-q.
\end{equation*}
Since the Hessian matrix of $\hat{V}(\hat{x})$ is a positive diagonal matrix, $\forall~x\in\mathscr{S}(x^*)\cap\mathbb{R}^n_{\textgreater 0}$ the Hessian matrix $H(V(x))$ is semi-positive definite. This means that $V(x)$ is a convex function in $\mathscr{S}(x^*)\cap\mathbb{R}^n_{\textgreater 0}$, and $x^*$ is a minimum point for $V(x)$, i.e., $V(x)\geq V(x^*)$. Therefore, we have $V(x)\geq 0$ with equality hold iff $x=x^*$. Further, we have
\begin{equation*}
\begin{aligned}
\frac{dV(x)}{dt}&=\nabla ^\top V(x)Sv(x) \\
&=\nabla
^\top\hat{V}(\hat{x})D_{2}Sv(x) \\
&=\nabla ^{\top}\hat{V}(\hat{x})
\begin{bmatrix}
D_1 & \mathbbold{0}_{(n-q) \times q}
\end{bmatrix}
\begin{bmatrix}
S_1\\S_2
\end{bmatrix}
v(x)  \\
&=\nabla ^\top\hat{V}(\hat{x})D_{1}S_{1}v(x) \\
&\leq 0.
\end{aligned}
\end{equation*}
{where the last inequality holds from \mbox{\cref{stability of mass action system for theorem 3}}}. Moreover, when $x=x^*$, we get $\frac{dV(x)}{dt}=0$. Hence, $x^*$ is a stable equilibrium in the original MAS $(\mathcal{X,C,R,K})$ with $V(x)$ as the Lyapunov function.
\end{proof}

\begin{corollary}\label{cor:1}
{Consider a mass-conserved MAS $(\mathcal{X,C,R,K})$ described by \mbox{\cref{CRN-dynamics0}} with $x^*\in\mathbb{R}^n_{> 0}$ as an equilibrium. Let $(\mathcal{\hat{X},\hat{C},\hat{R},\hat{K}})$ be a reconstruction.} If $\hat{x}^*\in\mathbb{R}^{n-q}_{> 0}$ is a complex balanced equilibrium in the reconstructed system , then $x^*$ is stable in the original MAS.
\end{corollary}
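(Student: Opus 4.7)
The plan is to apply \cref{thm:3} directly, using the classical pseudo-Helmholtz function of the reconstruction in the role of $\hat{V}$. First, by \cref{LemmaEquilibrium} the projection $\hat{x}^*=x^*_\bot$ is automatically an equilibrium of $(\mathcal{\hat{X},\hat{C},\hat{R},\hat{K}})$; the hypothesis of the corollary upgrades this to a complex balanced equilibrium. Hence \cref{thm:1} (Horn--Jackson) applies to the reconstructed system and guarantees that $\hat{x}^*$ is locally asymptotically stable, with the pseudo-Helmholtz function
\begin{equation*}
\hat{V}(\hat{x})=\sum_{i=1}^{n-q}\left(\hat{x}^*_i-\hat{x}_i-\hat{x}_i\ln\frac{\hat{x}^*_i}{\hat{x}_i}\right)
\end{equation*}
serving as a Lyapunov function. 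In particular $\hat{x}^*$ is stable.

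Next I would verify that this $\hat{V}$ meets the regularity/Hessian hypotheses required by \cref{thm:3}. It is manifestly $\mathscr{C}^2$ on $\mathbb{R}^{n-q}_{>0}$, and a routine computation gives
\begin{equation*}
H(\hat{V}(\hat{x}))=\mathrm{diag}\!\left(\tfrac{1}{\hat{x}_1},\ldots,\tfrac{1}{\hat{x}_{n-q}}\right),
\end{equation*}
which is a positive diagonal matrix throughout the positive orthant. Consequently every hypothesis of \cref{thm:3} is satisfied with $\hat{V}$ as above, and the theorem yields stability of $x^*$ in $(\mathcal{X,C,R,K})$.

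I do not expect any genuine obstacle: the corollary is essentially a direct composition of \cref{LemmaEquilibrium}, \cref{thm:1} and \cref{thm:3}. The only mildly delicate point worth spelling out is the domain issue, namely that the pseudo-Helmholtz function is defined on $\mathbb{R}^{n-q}_{>0}$ rather than on the closed orthant $\mathbb{R}^{n-q}_{\geq 0}$ stated in \cref{thm:3}; this is a local statement at $\hat{x}^*\in\mathbb{R}^{n-q}_{>0}$, so restricting the construction of $V(x)$ in \cref{LyaNrecon} to $\mathscr{S}(x^*)\cap\mathbb{R}^n_{>0}$ (as is already done in the proof of \cref{thm:3}) makes the invocation rigorous without any additional work.
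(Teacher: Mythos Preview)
Your proposal is correct and follows essentially the same route as the paper: invoke \cref{thm:1} to obtain stability of the reconstruction with the pseudo-Helmholtz function $G(\hat{x})$ as Lyapunov function, observe that $G\in\mathscr{C}^2$ with Hessian $\mathrm{diag}(1/\hat{x}_1,\ldots,1/\hat{x}_{n-q})$, and then apply \cref{thm:3}. Your added remark on the domain of $\hat V$ is a reasonable clarification but not present in the paper's (terser) proof.
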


\begin{proof} From \cref{thm:1}, the complex balanced system $(\mathcal{\hat{X},\hat{C},\hat{R},\hat{K}})$ is locally asymptotically stable at $\hat{x}^*$ with $G(\hat{x})$ defined like \cref{Gibbs} as the Lyapunov function. Note that
$G\in\mathscr{C}^2$ and $\frac{\partial G^2(\hat{x})}{\partial
\hat{x}^2}=\mathrm{diag}(1/\hat{x}_1,\cdots,1/\hat{x}_{n-q})$, which satisfy the conditions requested by \cref{thm:3}, therefore, $(\mathcal{X,C,R,K})$ is stable at $x^*$.
\end{proof}

In the following special case, the stability properties can be transferred mutually between the original network and the reconstruction.

\begin{proposition}\label{cor:2}
Consider a MAS $(\mathcal{X,C,R,K})$ with the linear kinetics $\dot{x}=Px$, where $P\in\mathbb{R}^{n\times n}$ is a Metzler matrix. Assume that $x^*\in\mathbb{R}^n_{> 0}$ is an equilibrium in $(\mathcal{X,C,R,K})$ and $(\mathcal{\hat{X},\hat{C},\hat{R},\hat{K}})$ is a reconstruction of $(\mathcal{X,C,R,K})$ under a positive diagonal matrix $D$, then $(\mathcal{X,C,R,K})$ is stable at $x^*$ if and only if $(\mathcal{\hat{X},\hat{C},\hat{R},\hat{K}})$ is stable at $\hat{x}^*\in\mathbb{R}^n_{> 0}$.
\end{proposition}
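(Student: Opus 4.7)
The hypothesis that $D$ is a positive diagonal matrix places us in the $q = 0$ branch of \cref{def:7}, so $\hat x = x$ and the defining identity $DSv(x) = \hat S \hat v(\hat x)$ specializes, under the linear kinetics $Sv(x) = Px$, to $\hat S \hat v(\hat x) = DP \hat x$. The reconstructed MAS therefore has linear dynamics $\dot{\hat x} = DP \hat x$; the matrix $DP$ is again Metzler (positive diagonal left-scaling preserves the sign of every off-diagonal entry), and by \cref{LemmaEquilibrium} together with the invertibility of $D$ the reconstructed equilibrium is $\hat x^* = x^*$.

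I would next record a symmetry that makes the statement genuinely biconditional: rewriting the identity as $Sv(x) = D^{-1}\hat S \hat v(\hat x)$ shows that $(\mathcal{X,C,R,K})$ is itself a reconstruction of $(\mathcal{\hat X,\hat C,\hat R,\hat K})$ with reconstructing matrix $D^{-1}$, again positive diagonal. Thus it suffices to prove a single direction, and the converse follows by swapping the roles of $(P,D)$ and $(DP, D^{-1})$, both invocations going through \cref{thm:3}.

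For the chosen direction, after the affine shift $y = \hat x - \hat x^*$ the reconstruction's stability at $\hat x^*$ is equivalent to Lyapunov stability of the origin for $\dot y = DPy$. In the Hurwitz regime the diagonal Lyapunov theorem for Metzler matrices supplies $W \succ 0$ diagonal with $WP + P^\top W \prec 0$; since $W$ and $D$ commute (both diagonal), the candidate $\hat V(\hat x) = (\hat x - \hat x^*)^\top (D^{-1}W)(\hat x - \hat x^*)$ has positive diagonal Hessian and satisfies $(DP)^\top (D^{-1}W) + (D^{-1}W)(DP) = P^\top W + WP \prec 0$. So $\hat V$ meets the hypotheses of \cref{thm:3} and yields stability at $x^*$ in the original MAS; the same construction with $D^{-1}$ in place of $D$ furnishes the converse.

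The main obstacle I anticipate is the marginally stable (non-Hurwitz) case, where $P$ has eigenvalues on the imaginary axis and a positive diagonal quadratic Lyapunov certificate need not exist (already a simple $2 \times 2$ triangular Metzler example with a zero diagonal entry frustrates the diagonal Lyapunov construction). Covering this regime requires either a non-quadratic Lyapunov function or a separate argument showing that semisimplicity of the zero eigenvalue transfers from $P$ to $DP$, which follows from $\ker P = \ker DP$ (by invertibility of $D$) together with a Perron--Frobenius analysis on the central eigenspace; reconciling this with the diagonal-Hessian requirement of \cref{thm:3} is the technically delicate step.
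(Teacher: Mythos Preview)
Your approach diverges substantially from the paper's. The paper's proof is a two-line appeal to an external result: once one observes that the reconstruction has dynamics $\dot{\hat x}=DP\hat x$ with $DP$ again Metzler, the equivalence of stability for $P$ and $DP$ is exactly the statement of $D$-stability for Metzler matrices, which the paper cites from \cite{Mason2009} without further argument. No Lyapunov construction, no invocation of \cref{thm:3}, and no case split on Hurwitz versus marginal.

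Your route is more constructive but also more circuitous. In the Hurwitz branch you are essentially reproving the $D$-stability fact by hand via the diagonal Lyapunov theorem: the computation $(DP)^\top(D^{-1}W)+(D^{-1}W)(DP)=P^\top W+WP$ is precisely why Hurwitz Metzler matrices are $D$-stable. Routing this through \cref{thm:3} is unnecessary and slightly muddled logically---you obtain $W$ from the hypothesis that $P$ is Hurwitz (a property of the \emph{original} system), then build $\hat V$ for the reconstruction, then apply \cref{thm:3} to conclude stability of the original again. The symmetry observation (that the original is a reconstruction of the reconstruction under $D^{-1}$) is sound, but once you have a diagonal quadratic Lyapunov function for one system you already have one for the other directly; \cref{thm:3} adds nothing here since $q=0$.

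The marginal case you flag as the main obstacle is real for your approach, and your sketch (semisimplicity transfer via $\ker P=\ker DP$ plus Perron--Frobenius on the critical eigenspace) is on the right track but would need work. The paper sidesteps this entirely by delegating to the cited reference, which covers both the Hurwitz and marginal regimes for Metzler matrices. If you want a self-contained argument, the cleanest path is to use that for Metzler $P$ there is a real dominant eigenvalue $\mu(P)$ with $\mathrm{Re}\,\lambda\le\mu(P)$ for every eigenvalue $\lambda$, and that the sign of $\mu(P)$ coincides with the sign of $\mu(DP)$ for positive diagonal $D$---this handles Lyapunov stability and asymptotic stability simultaneously without any Lyapunov-function construction.
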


\begin{proof}
Clearly, $\hat{x}^*$ is an equilibrium in $(\mathcal{\hat{X},\hat{C},\hat{R},\hat{K}})$. Note that the reconstruction $(\mathcal{\hat{X},\hat{C},\hat{R},\hat{K}})$ induces a linear kinetic dynamics $\dot{\hat{x}}=DP\hat{x}$, in which $DP$ is also a Metzler matrix. Based on the $D$-stability of matrices
\cite{Mason2009}, the system $(\mathcal{X,C,R,K})$ is stable at $x^*$ if and only if $(\mathcal{\hat{X},\hat{C},\hat{R},\hat{K}})$ is stable at $\hat{x}^*$.
\end{proof}

As can be seen from \cref{cor:1} and \cref{cor:2}, the stability properties are retained from the reconstruction to the original network usually requesting the former to have special network structure. As far as the current technique is concerned, it is difficult to say that a stable reconstruction with a general structure must lead to a stable original network. We place this issue as a possible point of future research. The high validity of complex balancing in the reconstruction for stability retainment motivates us to further use this strategy to characterize asymptotic stability of the original network.

\section{Asymptotic stability characterized through complex balanced reconstruction}\label{sec:4}
In \cref{thm:3}, we are able to characterize stability of equilibrium points in MASs, but fail to capture {local asymptotic stability}. Here, the stability of an equilibrium means that the system trajectory runs in a bounded ball centered around this equilibrium while {the local asymptotic stability means that the system trajectory will asymptotically converge to this equilibrium if the trajectory starts from a neighborhood of the equilibrium.} The main reason is that there may exist multiple equilibria in the MAS so that the time derivative of the Lyapunov function is negative semidefinite with respect to the concerning equilibrium. In this section, we will address this issue to characterize the local asymptotic stability of some MASs through setting the reconstruction to have a complex balancing structure. We firstly introduce a special reconstruction.
\subsection{Reverse reconstruction}
Reverse reconstruction is a generalized MAS \cite{Horn72}, obtained from the reconstruction for a given MAS, which is defined as follows.

\begin{definition}\label{def:8}
Given a {mass-conserved} MAS $(\mathcal{X,C,R,K})$, let $(\mathcal{\hat{X},\hat{C},\hat{R},\hat{K}})$ denote its reconstruction under the reconstructing matrix $D$ defined in \cref{reconstruction matrix}. A MAS $(\mathcal{\tilde{X},\tilde{C},\tilde{R},\tilde{K}})$ is the reverse reconstruction of $(\mathcal{X,C,R,K})$ with respect to $(\mathcal{\hat{X},\hat{C},\hat{R},\hat{K}})$ if its
complexes set {$\tilde{\mathcal{C}}=\bigcup_{j=1}^{\tilde{r}}
\{\tilde{Z}_{\cdot j},\tilde{Z}'_{\cdot j}\}$} and reactions set
$\tilde{\mathcal{R}}=\bigcup_{j=1}^{\tilde{r}}\{\xymatrix{\tilde{Z}_{\cdot
j} \ar ^{\tilde{k}_{j}~} [r] & \tilde{Z}'_{\cdot j}}\}$ {satisfy}
\begin{itemize}
  \item[\rm{(}\romannumeral1)] $\tilde{r}=\hat{r}$;
  \item[\rm{(}\romannumeral2)] $\tilde{Z}_{\cdot j}=\hat{Z}_{\cdot j},~\tilde{Z}'_{\cdot j}=\hat{Z}_{\cdot j}+D_1^{-1}(\hat{Z}'_{\cdot j}-\hat{Z}_{\cdot j}),~\tilde{k}_j=\hat{k}_j,~\forall~j=1,\cdots,\tilde{r}$.
\end{itemize}
\end{definition}

\begin{remark}\label{gaotong2}
The dynamics of the reverse reconstruction $(\mathcal{\tilde{X},\tilde{C},\tilde{R},\tilde{K}})$ follows \begin{equation}
\dot{\tilde{x}}=\tilde{S}\tilde{v}(\tilde{x})
\end{equation}
where $\tilde{S}\in\mathbb{R}^{(n-q)\times \tilde{r}}$, $\tilde{v}(\cdot)$ is a $\tilde{r}$-dimensional vector valued function, {and $\tilde{x}$ is the projection of $\hat{x}$ onto itself}. {It is clear from the reverse reconstruction definition that $\tilde{S}=D_1^{-1}\hat{S}$ and $\tilde{v}(\tilde{x})=\hat{v}(\hat{x})$.}
\end{remark}

\begin{remark}\label{rem:6}
The underlying CRN in the reverse reconstruction does not fit into the CRN class defined in \cref{def:1}, since the elements in $\tilde{Z}'_{\cdot j}$ are not necessarily nonnegative integer. However, this change does not invalidate the subsequent results. {The main reason is that the reverse reconstruction belongs to generalized MASs, but some results of CRN theory (used for the subsequent analysis)\mbox{\cite{Feinberg87}}, like power-law rate functions, definitions of stoichiometric subspace and stoichiometric compatibility classes, carry over to the case of generalized mass action kinetics\mbox{\cite{Horn72,Minke2017}}. }
\end{remark}

\begin{proposition}\label{propo:4}
For a {mass-conserved} MAS $(\mathcal{X,C,R,K})$ with dynamics \cref{CRN-dynamics0} and an equilibrium $x^*\in\mathbb{R}^n_{\textgreater 0}$, let $(\mathcal{\hat{X},\hat{C},\hat{R},\hat{K}})$ be its reconstruction bridged by the reconstructing matrix $D$ defined in \cref{reconstruction matrix}, and $(\mathcal{\tilde{X},\tilde{C},\tilde{R},\tilde{K}})$ be its reverse reconstruction with respect to $(\mathcal{\hat{X},\hat{C},\hat{R},\hat{K}})$. {Then $\tilde{x}^*=x^*_{\bot}\in\mathbb{R}^{n-q}_{\textgreater 0}$ is an equilibrium in $(\mathcal{\tilde{X},\tilde{C},\tilde{R},\tilde{K}})$, and the dynamics of $(\mathcal{\tilde{X},\tilde{C},\tilde{R},\tilde{K}})$ is equivalent to that of $(\mathcal{X,C,R,K})$ at the first $n-q$ species.}
\end{proposition}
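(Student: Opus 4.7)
The plan is to unpack the three layers of definitions (original MAS, reconstruction, reverse reconstruction) to obtain a short chain of equalities, since once the correct identifications are made, both claims essentially reduce to the defining relation of reconstruction in \cref{ReconstructionDefinition2}. The proof naturally splits into two parts corresponding to the two assertions: (i) equilibrium preservation and (ii) equivalence of dynamics on the first $n-q$ coordinates.

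First I would establish the basic structural identities for the reverse reconstruction noted in \cref{gaotong2}. From \cref{def:8}, the reactant complexes and rate constants are preserved ($\tilde{Z}_{\cdot j}=\hat{Z}_{\cdot j}$, $\tilde{k}_j=\hat{k}_j$), so under mass action the rate vectors satisfy $\tilde{v}(\tilde{x})=\hat{v}(\hat{x})$ when $\tilde{x}$ and $\hat{x}$ have the same numerical value (which they do, both being $x_\bot$). The reaction vectors read $\tilde{S}_{\cdot j}=\tilde{Z}'_{\cdot j}-\tilde{Z}_{\cdot j}=D_1^{-1}(\hat{Z}'_{\cdot j}-\hat{Z}_{\cdot j})=D_1^{-1}\hat{S}_{\cdot j}$, giving $\tilde{S}=D_1^{-1}\hat{S}$. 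These two identities are the workhorses for what follows.

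For the equilibrium claim, I would invoke \cref{LemmaEquilibrium} to assert that $\hat{x}^{*}=x_\bot^{*}$ is an equilibrium of the reconstruction, i.e.\ $\hat{S}\hat{v}(\hat{x}^{*})=\mathbbold{0}_{n-q}$. Multiplying on the left by the invertible matrix $D_1^{-1}$ and using the identities above gives
\begin{equation*}
\tilde{S}\tilde{v}(\tilde{x}^{*})=D_1^{-1}\hat{S}\hat{v}(\hat{x}^{*})=\mathbbold{0}_{n-q},
\end{equation*}
so $\tilde{x}^{*}=x_\bot^{*}\in\mathbb{R}^{n-q}_{>0}$ is indeed an equilibrium of $(\mathcal{\tilde{X},\tilde{C},\tilde{R},\tilde{K}})$. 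For the dynamical equivalence, I would partition $S=(S_1^\top,S_2^\top)^\top$ as in \cref{rem:3}, so the original dynamics for the first $n-q$ species is $\dot{x}_\bot=S_1 v(x)$. The reverse reconstruction dynamics then unfolds as
\begin{equation*}
\dot{\tilde{x}}=\tilde{S}\tilde{v}(\tilde{x})=D_1^{-1}\hat{S}\hat{v}(\hat{x})=D_1^{-1}\bigl(D_1 S_1 v(x)\bigr)=S_1 v(x)=\dot{x}_\bot,
\end{equation*}
where the third equality is precisely the reconstruction identity \cref{ReconstructionDefinition2}. By \cref{def:33}, this is dynamical equivalence of the reverse reconstruction with the first $n-q$ components of the original MAS.

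The main conceptual obstacle, rather than any computational difficulty, is making sure the identification of state spaces is clean: $\hat{x}$, $\tilde{x}$, and $x_\bot$ all live in $\mathbb{R}^{n-q}$ and take the same numerical values along trajectories, so the equality $\tilde{v}(\tilde{x})=\hat{v}(\hat{x})$ must be read as equality of rate vectors evaluated at the common point. Once this is pinned down (and \cref{rem:6} justifies that power-law kinetics with possibly non-integer exponents in $\tilde{Z}'_{\cdot j}$ still give well-defined dynamics on $\mathbb{R}^{n-q}_{>0}$), both conclusions of the proposition follow from the two short displayed calculations above.
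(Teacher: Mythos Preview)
Your proposal is correct and follows essentially the same approach as the paper: both use the identities $\tilde{S}=D_1^{-1}\hat{S}$ and $\tilde{v}(\tilde{x})=\hat{v}(\hat{x})$ from \cref{gaotong2}, combine them with the reconstruction relation, and read off $S_1v(x)=\tilde{S}\tilde{v}(\tilde{x})$. The only cosmetic difference is that the paper inverts the full block matrix $D$ in \cref{ReconstructionDefinition1} and then extracts the top block, whereas you work directly with the top-row identity \cref{ReconstructionDefinition2}; and the paper obtains the equilibrium claim as a corollary of the dynamical equivalence rather than via \cref{LemmaEquilibrium}, but the content is the same.
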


\begin{proof}
{Remark 12 reveals that $\tilde{S}=D_1^{-1}\hat{S}$ and $\tilde{v}(\tilde{x})=\hat{v}(\hat{x})$. Hence, $D_1^{-1}\hat{S}\hat{v}(\hat{x})=\tilde{S}\tilde{v}(\tilde{x})$. Further, utilizing \mbox{\cref{ReconstructionDefinition1}} we get}
\begin{equation*}
{Sv(x)=}
\begin{bmatrix}
{E_{n-q}}\\{-C_r^{-\top}C_l^\top}
\end{bmatrix}
{\tilde{S}\tilde{v}(\tilde{x})},
\end{equation*}
{where $E_{n-q}$ is a $(n-q)$-dimensional unit matrix. Therefore, $S_1v(x)=\tilde{S}\tilde{v}(\tilde{x})$. Note that the left term measures the dynamics of $(\mathcal{X,C,R,K})$ at the first $n-q$ species, i.e., $\dot{x}_\bot(t)=S_1v(x)$. We thus obtain $\dot{x}_\bot(t)=\dot{\tilde{x}}(t)$. $\tilde{S}\tilde{v}(\tilde{x}^*)=\mathbbold{0}_{n-q}$ is straightforward since $Sv(x^*)=\mathbbold{0}_n$. Therefore, the results are true.}
\end{proof}

\begin{corollary}\label{cor:3}
{Assume that $(\mathcal{X,C,R,K})$, $(\mathcal{\hat{X},\hat{C},\hat{R},\hat{K}})$,
$(\mathcal{\hat{X},\tilde{C},\tilde{R},\tilde{K}})$ and $D$ have the same meanings with those in
\mbox{\cref{propo:4}}. If $x^*\in\mathbb{R}^n_{\textgreater 0}$ is an equilibrium in $(\mathcal{X,C,R,K})$, then the reconstruction and reverse reconstruction systems have the common equilibrium point $\hat{x}^*=\tilde{x}^*=x_{\bot}^*$, and moreover, $\mathrm{Im}(\tilde{S})=\mathrm{Im}(D_1^{-1}\hat{S})$.}
\end{corollary}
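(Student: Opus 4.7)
The plan is to assemble this corollary directly from the three preceding results—Lemma \ref{LemmaEquilibrium}, Proposition \ref{propo:4}, and Remark \ref{gaotong2}—rather than working from first principles, since each claim has essentially been set up already.

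For the common-equilibrium assertion, I would first apply Lemma \ref{LemmaEquilibrium}: since $x^*\in\mathbb{R}^n_{>0}$ is an equilibrium of $(\mathcal{X,C,R,K})$, the lemma yields $\hat{x}^*=x_{\bot}^*\in\mathbb{R}^{n-q}_{>0}$ as an equilibrium of the reconstruction $(\mathcal{\hat{X},\hat{C},\hat{R},\hat{K}})$. Proposition \ref{propo:4} gives the parallel statement for the reverse reconstruction, namely that $\tilde{x}^*=x_{\bot}^*\in\mathbb{R}^{n-q}_{>0}$ is an equilibrium of $(\mathcal{\tilde{X},\tilde{C},\tilde{R},\tilde{K}})$. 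Reading these two identities together gives $\hat{x}^*=\tilde{x}^*=x_{\bot}^*$, which is the first half of the conclusion.

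For the image identity, I would appeal to Remark \ref{gaotong2}, which records that $\tilde{S}=D_1^{-1}\hat{S}$ as an immediate consequence of the reverse-reconstruction definition (in particular $\tilde{r}=\hat{r}$ together with $\tilde{Z}'_{\cdot j}-\tilde{Z}_{\cdot j}=D_1^{-1}(\hat{Z}'_{\cdot j}-\hat{Z}_{\cdot j})$ for every $j$). Taking the column span of both sides of this matrix identity yields $\mathrm{Im}(\tilde{S})=\mathrm{Im}(D_1^{-1}\hat{S})$ without further work.

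There is essentially no obstacle here; the corollary is a packaging statement, and its value is in isolating two facts (the coincidence of equilibria across the triple $(\mathcal{X,C,R,K})$, $(\mathcal{\hat{X},\hat{C},\hat{R},\hat{K}})$, $(\mathcal{\tilde{X},\tilde{C},\tilde{R},\tilde{K}})$, and the stoichiometric relation $\tilde{S}=D_1^{-1}\hat{S}$) that will be reused when the complex balanced reconstruction machinery is invoked to upgrade stability to asymptotic stability. If anything deserves emphasis, it is that the invertibility of $D_1$ makes $D_1^{-1}$ a linear isomorphism, so the stoichiometric subspaces $\mathrm{Im}(\tilde{S})$ and $\mathrm{Im}(\hat{S})$ share the same dimension and will yield compatible positive stoichiometric compatibility classes for the two reconstructions when equilibrium uniqueness is later analyzed.
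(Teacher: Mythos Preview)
Your proposal is correct and matches the paper's treatment: the paper states this corollary without proof, treating it as an immediate consequence of Lemma~\ref{LemmaEquilibrium}, Proposition~\ref{propo:4}, and the identity $\tilde{S}=D_1^{-1}\hat{S}$ recorded in Remark~\ref{gaotong2}, which is precisely how you have assembled it.
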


Reverse reconstruction is a very special MAS, which has the same dynamics with that of the original MAS {at the first $n-q$ species}. Therefore, if an equilibrium point in the original network system is locally
asymptotically stable {whose first $n-q$ entries are naturally locally asymptotically stable}, then this stability behavior can be potentially characterized based on an similar analysis in the reverse reconstruction and on how stability properties are retained under the adopted nonsingular transformation. We tackle these tasks in the following by using a particular network class for reconstruction, i.e., so-called complex balanced MASs, for which the local asymptotic stability has been proved in \cref{thm:1}.

\subsection{Complex balanced reconstruction}
As said in \cref{cor:1}, when a mass-conserved MAS has a complex balanced MAS as a reconstruction, then its equilibrium must be stable. We thus set the reconstruction to be a complex balanced system for further analysis. This kind of reconstructions is referred to as the complex balanced reconstruction in the context. For simplicity of symbols, we denote the set of MASs admitting complex balanced reconstructions by $\mathscr{N}_\mathrm{Com}$, i.e., for any mass-conserved MAS $(\mathcal{X,C,R,K})\in\mathscr{N}_\mathrm{Com}$, there exist a complex balanced MAS, denoted by $(\mathcal{\hat{X}},\hat{\mathcal{C}}_C,\hat{\mathcal{R}}_C,\hat{\mathcal{K}}_C)$, and a reconstructing matrix $D$ defined in \cref{reconstruction matrix} satisfying
\begin{equation}\label{ComplexReconstruction}
 {DSv(x)=}\begin{bmatrix}
{\hat{S}_C\hat{v}_C(\hat{x})} \\
\mathbbold{0}_{q}
\end{bmatrix},
\end{equation}
where $\hat{S}_{C}\in\mathbb{Z}^{(n-q)\times \hat{r}_C}$.


In the case of existence of multiple equilibria in a MAS, the key to extend stability of equilibrium to asymptotic stability is to identify that {there is} only/at most an equilibrium in each positive stoichiometric compatibility class. The main reason is that the state of the MAS will evolve only in a stoichiometric compatibility class, as given in \cref{StateEvolution}. We try to reach this conclusion for the reverse reconstruction firstly, since its dynamics links that of the original MAS in $\mathscr{N}_\mathrm{Com}$ by a constant matrix.

As one may know, there exists a close relation between equilibria for a complex balanced MAS, shown as follows.

\begin{lemma}
[\cite{Feinberg1995,Horn72}]\label{lem:1}
For any complex balanced MAS $(\mathcal{\hat{X}},\hat{\mathcal{C}}_C,\hat{\mathcal{R}}_C,\hat{\mathcal{K}}_C)$ admitting an equilibrium $\hat{x}^*\in\mathbb{R}^{n-q}_{> 0}$, the concentration vector $\hat{x}^{\dag}\in\mathbb{R}^{n-q}_{> 0}$ is another equilibrium if and only if
\begin{equation*}
\mathrm{Ln}\left(\frac{\hat{x}^{\dag}}{\hat{x}^{\ast}}\right)\in \mathrm{Ker}(\hat{S}_C^\top).
\end{equation*}
\end{lemma}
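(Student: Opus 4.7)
The plan is to parametrize a candidate second equilibrium by $\mu := \mathrm{Ln}(\hat{x}^{\dagger}/\hat{x}^{\ast}) \in \mathbb{R}^{n-q}$, which turns the mass-action law into the key identity $\hat{v}_{C,j}(\hat{x}^{\dagger}) = \hat{v}_{C,j}(\hat{x}^{\ast}) \exp(\hat{Z}_{\cdot j}^{\top}\mu)$ for every reaction $j$. Denote the incidence matrix of the complex-balanced reconstruction $(\mathcal{\hat{X}},\hat{\mathcal{C}}_{C},\hat{\mathcal{R}}_{C},\hat{\mathcal{K}}_{C})$ by $\hat{B}_{C}$, so that $\hat{S}_{C} = \hat{Z}_{C}\hat{B}_{C}$. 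Throughout, I rely on the classical fact that existence of a positive complex-balanced equilibrium forces the underlying reaction graph to be weakly reversible, so every linkage class of the reconstruction is strongly connected.

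For sufficiency, assume $\mu \in \mathrm{Ker}(\hat{S}_{C}^{\top})$, i.e., $\mu^{\top}(\hat{Z}'_{\cdot j} - \hat{Z}_{\cdot j}) = 0$ for every reaction $j$. Iterating this identity along directed paths inside each strongly connected linkage class shows that $\exp(\hat{Z}_{\cdot l}^{\top}\mu)$ is constant on each linkage class; call this common value $\lambda_{\ell}>0$. Then $\hat{v}_{C,j}(\hat{x}^{\dagger}) = \lambda_{\ell(j)}\hat{v}_{C,j}(\hat{x}^{\ast})$, and because each row of $\hat{B}_{C}$ (the complex-balance equation at a single complex) is supported inside a single linkage class, uniform linkage-class rescaling of the rate vector preserves the relation $\hat{B}_{C}\hat{v}_{C}(\hat{x}^{\ast}) = \mathbbold{0}$. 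Hence $\hat{B}_{C}\hat{v}_{C}(\hat{x}^{\dagger}) = \mathbbold{0}$, and pre-multiplying by $\hat{Z}_{C}$ gives $\hat{S}_{C}\hat{v}_{C}(\hat{x}^{\dagger}) = \mathbbold{0}_{n-q}$, so $\hat{x}^{\dagger}$ is an equilibrium.

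The necessity direction is what I expect to be the main obstacle, since a priori $\hat{x}^{\dagger}$ need only satisfy the weaker condition $\hat{S}_{C}\hat{v}_{C}(\hat{x}^{\dagger}) = \mathbbold{0}$ rather than full complex balance. My plan is to first promote $\hat{x}^{\dagger}$ to a complex-balanced equilibrium via the Horn--Jackson inequality: with the pseudo-Helmholtz function $G$ based at $\hat{x}^{\ast}$, one has $\dot{G}(\hat{x}) \leq 0$ on all of $\mathbb{R}^{n-q}_{>0}$ with equality if and only if $\hat{B}_{C}\hat{v}_{C}(\hat{x}) = \mathbbold{0}$; evaluating at the equilibrium $\hat{x}^{\dagger}$ forces $\dot{G}(\hat{x}^{\dagger}) = 0$, so $\hat{x}^{\dagger}$ is complex balanced. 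Next, by the Matrix--Tree theorem---equivalently, by the fact that the kernel of the incidence matrix of a strongly connected digraph is one-dimensional and positively spanned---the two strictly positive vectors $\hat{v}_{C}(\hat{x}^{\ast})$ and $\hat{v}_{C}(\hat{x}^{\dagger})$ must be scalar multiples of each other within each linkage class $\ell$, with some ratio $\lambda_{\ell}>0$. Combining this with the parametrization gives $\exp(\hat{Z}_{\cdot j}^{\top}\mu) = \lambda_{\ell(j)}$ for every reaction $j$, and since both the reactant and the product complex of any reaction lie in the same linkage class, $\hat{Z}_{\cdot j}^{\top}\mu = (\hat{Z}'_{\cdot j})^{\top}\mu$ for every $j$, which is exactly $\mu \in \mathrm{Ker}(\hat{S}_{C}^{\top})$, closing the argument.
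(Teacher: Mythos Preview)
The paper does not prove this lemma at all: it is stated with citations to Feinberg (1995) and Horn--Jackson (1972) and used as a black box. So there is no ``paper's own proof'' to compare against; your write-up is essentially a reconstruction of the classical argument from those references, and overall the strategy (promote any positive equilibrium to a complex-balanced one via the dissipation inequality, then exploit strong connectedness of the linkage classes) is the standard and correct one.

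There is, however, one genuine slip in the necessity step. You write that ``the kernel of the incidence matrix of a strongly connected digraph is one-dimensional and positively spanned,'' and from this conclude that the rate vectors $\hat{v}_{C}(\hat{x}^{\ast})$ and $\hat{v}_{C}(\hat{x}^{\dagger})$ are scalar multiples on each linkage class. That justification is false as stated: for a strongly connected digraph with $c$ vertices and $r$ edges, $\mathrm{rank}(\hat{B}_{C})=c-1$, so $\dim\mathrm{Ker}(\hat{B}_{C})=r-c+1$, which exceeds $1$ whenever the linkage class has more than one independent cycle. Two arbitrary positive vectors in $\mathrm{Ker}(\hat{B}_{C})$ need not be proportional. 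What \emph{is} one-dimensional on each strongly connected linkage class is the kernel of the Kirchhoff (weighted Laplacian) matrix $\hat{L}_{C}$, and the Matrix--Tree theorem you invoke really pertains to $\hat{L}_{C}$, not $\hat{B}_{C}$. The repair is immediate: complex balance reads $\hat{L}_{C}\hat{\varPsi}_{C}(\hat{x})=\mathbbold{0}$ in the complex-centered formulation of the paper, so the \emph{monomial} vectors $\hat{\varPsi}_{C}(\hat{x}^{\ast})$ and $\hat{\varPsi}_{C}(\hat{x}^{\dagger})$ are proportional on each linkage class, which then forces $\exp(\hat{Z}_{\cdot l}^{\top}\mu)=\lambda_{\ell}$ for every complex $l$ in class $\ell$ (not just every reactant complex), and your final line goes through unchanged. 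With that correction in place, your argument is complete.
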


\begin{remark}\label{rem:8}
This lemma suggests that if a complex balanced MAS is conservative then it must admit multiple equilibrium points. However, mass conservation law is not necessary to say the existence of multiple equilibria in a complex balanced system. It is still possible to possess multiple equilibria for the complex balanced MAS even if mass conservation does not hold. Of course, whatever the case is, if there are multiple equilibria in a complex balanced MAS, then each equilibrium is in its individual positive stoichiometric compatibility class.
\end{remark}

We reach the following proposition to approach the result that each positive stoichiometric compatibility class in the reverse reconstruction of a MAS $(\mathcal{X,C,R,K})\in\mathscr{N}_\mathrm{Com}$ contains {only} an equilibrium.

\begin{proposition}\label{propo:6}
For a {mass-conserved} MAS $(\mathcal{X,C,R,K})$ admitting an equilibrium $x^*\in\mathbb{R}^n_{\textgreater 0}$, let $(\mathcal{\hat{X}},\hat{\mathcal{C}},\hat{\mathcal{R}},\hat{\mathcal{K}})$ be its reconstruction bridged by the reconstructing matrix $D$ given in \cref{reconstruction matrix}, i.e., satisfying \cref{ReconstructionDefinition1}. Assume that $x_0\in\mathbb{R}^n_{\textgreater 0}$ represents any initial state of $(\mathcal{X,C,R,K})$, then for $\forall~\hat{x}_0\in\mathbb{R}^{n-q}_{\textgreater 0}$ there exists a unique $\mu\in \mathrm{Ker}(\hat{S}^{\top})$ such that
\begin{equation}\label{UEforsub}
  D_{1}[\hat{x}^{\ast}\cdot\mathrm{Exp}(\mu)-\hat{x}_{0}]\in \mathrm{Im}(\hat{S}),
\end{equation}
where $\hat{x}^{\ast}$ is the vector of the first $n-q$ entries in $x^*$.
\end{proposition}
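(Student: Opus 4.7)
The plan is to recognize \cref{UEforsub} as a first-order optimality condition for a strictly convex coercive potential on the linear subspace $\mathrm{Ker}(\hat{S}^{\top})$, so that both existence and uniqueness of $\mu$ follow from a single convex-minimization argument. This is a close cousin of Birch's theorem and of the Horn--Jackson parametrization of equilibria of complex balanced systems, and the only real adaptation needed here is to build the positive diagonal $D_{1}$ into the potential from the outset.

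\textbf{Step 1 (convex potential).} Introduce $\Phi:\mathrm{Ker}(\hat{S}^{\top})\to\mathbb{R}$ by
\begin{equation*}
\Phi(\mu)=\sum_{i=1}^{n-q}d_{i}\bigl[\hat{x}^{\ast}_{i}e^{\mu_{i}}-(\hat{x}_{0})_{i}\mu_{i}\bigr].
\end{equation*}
A direct computation yields $\nabla\Phi(\mu)=D_{1}[\hat{x}^{\ast}\cdot\mathrm{Exp}(\mu)-\hat{x}_{0}]$ and Hessian $\mathrm{diag}(d_{i}\hat{x}^{\ast}_{i}e^{\mu_{i}})$, which is positive definite on $\mathbb{R}^{n-q}$, so $\Phi$ is strictly convex and so is its restriction to any linear subspace. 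At a minimizer of $\Phi$ restricted to $\mathrm{Ker}(\hat{S}^{\top})$ the gradient must be orthogonal to $\mathrm{Ker}(\hat{S}^{\top})=\mathrm{Im}(\hat{S})^{\perp}$, that is $\nabla\Phi(\mu)\in\mathrm{Im}(\hat{S})$, which is exactly \cref{UEforsub}. Hence it suffices to show that $\Phi$ attains a unique minimum on $\mathrm{Ker}(\hat{S}^{\top})$.

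\textbf{Step 2 (coercivity and conclusion).} Fix a unit vector $\nu\in\mathrm{Ker}(\hat{S}^{\top})$ and examine $\Phi(t\nu)$ as $t\to+\infty$. If some coordinate $\nu_{j}>0$, the term $d_{j}\hat{x}^{\ast}_{j}e^{t\nu_{j}}$ grows exponentially and dominates the $O(t)$ linear part, so $\Phi(t\nu)\to+\infty$. Otherwise all $\nu_{i}\le 0$ with $\nu\neq\mathbbold{0}_{n-q}$; since $\hat{x}_{0}\in\mathbb{R}^{n-q}_{>0}$ this forces $\sum_{i}d_{i}(\hat{x}_{0})_{i}\nu_{i}<0$, so the linear part $-t\sum_{i}d_{i}(\hat{x}_{0})_{i}\nu_{i}$ tends to $+\infty$ while each $d_{i}\hat{x}^{\ast}_{i}e^{t\nu_{i}}$ is bounded above by $d_{i}\hat{x}^{\ast}_{i}$. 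Either way $\Phi$ is coercive on $\mathrm{Ker}(\hat{S}^{\top})$; together with strict convexity this yields a unique minimizer, and by Step 1 this is the unique $\mu$ satisfying \cref{UEforsub}.

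The main obstacle I anticipate is precisely the coercivity case split: one must rule out that $\Phi$ stays bounded along some ray in $\mathrm{Ker}(\hat{S}^{\top})$. The ``all $\nu_{i}\le 0$'' branch is the delicate one, and it is exactly there that the strict positivity $\hat{x}_{0}\in\mathbb{R}^{n-q}_{>0}$ is used essentially; without it, $\mathrm{Ker}(\hat{S}^{\top})$ could support a recession direction of $\Phi$ and existence would fail. Reassuringly, no structural hypothesis on the reconstruction (complex balancedness, weak reversibility, or the like) is needed at this stage, which is why the proposition is stated for an arbitrary reconstruction and can serve as a purely geometric Birch-type lemma underpinning the subsequent dynamical results.
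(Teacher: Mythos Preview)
Your proposal is correct and follows essentially the same route as the paper: both introduce the same weighted potential $\Phi(\mu)=\sum_{i}d_{i}[\hat{x}^{\ast}_{i}e^{\mu_{i}}-(\hat{x}_{0})_{i}\mu_{i}]$, observe that its gradient is exactly $D_{1}[\hat{x}^{\ast}\cdot\mathrm{Exp}(\mu)-\hat{x}_{0}]$, and reduce \cref{UEforsub} to the existence of a unique minimizer of $\Phi$ on $\mathrm{Ker}(\hat{S}^{\top})$ via strict convexity and radial coercivity. The only difference is cosmetic: the paper outsources the coercivity case split to Feinberg's Appendix~B, whereas you spell it out (correctly noting that the branch $\nu_{i}\le 0$ is where the strict positivity of $\hat{x}_{0}$ is essential).
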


\begin{proof}
Clearly, $\hat{x}^*$ is an equilibrium in $(\mathcal{\hat{X}},\hat{\mathcal{C}},\hat{\mathcal{R}},\hat{\mathcal{K}})$, we thus define a function $\varphi_{D_1}:\mathbb{R}^{n-q}\mapsto\mathbb{R}$ as
\begin{equation}\label{phifunction}
\varphi_{D_{1}}(u)=\sum^{n-q}_{i=1}d_{i}
[\hat{x}_i^*\mathrm{exp}(u_{i})-\hat{x}_{0_i}u_{i}].
\end{equation}
Following the almost completely same proof as in \textit{Appendix B} of
\cite{Feinberg1995} where the function $\sum^{n}_{i=1}[\hat{x}_{i}^{\ast}\mathrm{exp}(u_{i})-\hat{x}_{0_i}u_{i}]$ was proved strictly convex, $\varphi_{D_{1}}(u)$ in \cref{phifunction} can be proved strictly convex and $\lim_{a\rightarrow\infty}\varphi_{D_{1}}(au)=\infty$ for any $u\neq \mathbbold{0}_{n-q}$. Note that the gradient of $\varphi_{D_{1}}(u)$ takes
\begin{equation*}\label{gradient_varphi}
\nabla\varphi_{D_{1}}(u)=D_{1}[\hat{x}^{\ast}\cdot\mathrm{Exp}(u)-\hat{x}_{0}]
\end{equation*}
and $\mathrm{Ker}(\hat{S}^{\top})\subset \mathbb{R}^{n-q}$.
In the same way as in \cite{Feinberg1995}, {we can prove that there exists a unique $\mu\in\mathrm{Ker}(\hat{S}^{\top})$ such that $\nabla\varphi_{D_{1}}^{\top}(\mu)u=0$ for any $u\in\mathrm{Ker}(\hat{S}^{\top})$}, so $\nabla\varphi_{D_{1}}(\mu)\in\mathrm{Im}(\hat{S})$. Namely, for any $\hat{x}_0\in\mathbb{R}^{n-q}_{\textgreater 0}$ there exists a unique $\mu\in \text{Ker}(\hat{S}^{\top})$ such that
\begin{equation*}
  D_{1}[\hat{x}^{\ast}\cdot\mathrm{Exp}(\mu)-\hat{x}_{0}]\in \text{Im}(\hat{S}),
\end{equation*}
which completes the proof.
\end{proof}

{We then give an important result about the reverse reconstruction for a MAS in $\mathscr{N}_\mathrm{Com}$.}

\begin{theorem}\label{gaoReverse}
{Consider a mass-conserved MAS $(\mathcal{X,C,R,K})\in\mathscr{N}_\mathrm{Com}$ governed by \mbox{\cref{CRN-dynamics0}} with an equilibrium $x^*\in\mathbb{R}^n_{\textgreater 0}$. Let $x_{0}\in \mathbb{R}^n_{> 0}$ represent any of its initial state, $(\mathcal{\hat{X}},\hat{\mathcal{C}}_C,\hat{\mathcal{R}}_C,\hat{\mathcal{K}}_C)$ be a complex balanced reconstruction with $D$ given by \mbox{\cref{reconstruction matrix}} as the reconstructing matrix, and $(\mathcal{\tilde{X}},\tilde{\mathcal{C}}_C,\tilde{\mathcal{R}}_C,\tilde{\mathcal{K}}_C)$ be the corresponding reverse reconstruction. Then $\forall~\tilde{x}_{0}\in \mathbb{R}^{n-q}_{> 0}$ there exists a unique equilibrium $\tilde{x}^{\dagger}$ in $(\mathcal{\tilde{X}},\tilde{\mathcal{C}}_C,\tilde{\mathcal{R}}_C,\tilde{\mathcal{K}}_C)$ such that $\tilde{x}^{\dagger}\in \tilde{\mathscr{S}}_C(\tilde{x}_0)\cap \mathbb{R}^{n-q}_{> 0}$. }
\end{theorem}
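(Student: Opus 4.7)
The plan is to exploit two facts: (i) invertibility of $D_1$ makes the positive equilibria of the reverse reconstruction coincide with those of the complex balanced reconstruction, and (ii) those equilibria admit the clean parameterization provided by \cref{lem:1}. The theorem then reduces to a single application of \cref{propo:6}.

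First I would note that, by \cref{gaotong2}, $\tilde{S}_C=D_1^{-1}\hat{S}_C$ and $\tilde{v}_C(\tilde{x})=\hat{v}_C(\hat{x})$, so nonsingularity of $D_1$ implies that $\tilde{x}\in\mathbb{R}^{n-q}_{>0}$ is an equilibrium of the reverse reconstruction if and only if it is an equilibrium of the complex balanced reconstruction. By \cref{LemmaEquilibrium}, the latter admits $\hat{x}^*=x^*_\bot$ as a (complex balanced) equilibrium, so \cref{lem:1} characterizes the common positive equilibrium set as
\begin{equation*}
\bigl\{\hat{x}^*\cdot\mathrm{Exp}(\mu) : \mu\in\mathrm{Ker}(\hat{S}_C^\top)\bigr\}.
\end{equation*}

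Next, a candidate $\tilde{x}^\dagger=\hat{x}^*\cdot\mathrm{Exp}(\mu)$ lies in $\tilde{\mathscr{S}}_C(\tilde{x}_0)\cap\mathbb{R}^{n-q}_{>0}$ iff $\tilde{x}^\dagger-\tilde{x}_0\in\mathrm{Im}(\tilde{S}_C)=\mathrm{Im}(D_1^{-1}\hat{S}_C)$, which, by invertibility of $D_1$, is equivalent to
\begin{equation*}
D_1\bigl[\hat{x}^*\cdot\mathrm{Exp}(\mu)-\tilde{x}_0\bigr]\in\mathrm{Im}(\hat{S}_C).
\end{equation*}
This is precisely the content of \cref{propo:6} applied to the complex balanced reconstruction with initial datum $\hat{x}_0=\tilde{x}_0$ and $\hat{S}$ replaced by $\hat{S}_C$: existence and uniqueness of such $\mu\in\mathrm{Ker}(\hat{S}_C^\top)$ follow, and $\tilde{x}^\dagger:=\hat{x}^*\cdot\mathrm{Exp}(\mu)$ is automatically in $\mathbb{R}^{n-q}_{>0}$.

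The main point requiring care, rather than a genuine obstacle, is verifying that the equivalence of equilibrium sets between reverse reconstruction and reconstruction survives at the level of positive points; this rests squarely on invertibility of $D_1$, together with the fact that under complex balancing every positive equilibrium of the reconstruction is itself complex balanced, so \cref{lem:1} is available for all of them. Once that is secured, existence and uniqueness split neatly into a parameterization via \cref{lem:1} and a strict convexity argument via \cref{propo:6}, and no further work is needed.
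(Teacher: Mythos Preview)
Your proposal is correct and follows essentially the same route as the paper: both arguments combine \cref{propo:6} (to obtain a unique $\mu\in\mathrm{Ker}(\hat{S}_C^\top)$ with $D_1[\hat{x}^*\cdot\mathrm{Exp}(\mu)-\hat{x}_0]\in\mathrm{Im}(\hat{S}_C)$), \cref{lem:1} (to identify $\hat{x}^*\cdot\mathrm{Exp}(\mu)$ as an equilibrium), and the relation $\mathrm{Im}(\tilde{S}_C)=D_1^{-1}\mathrm{Im}(\hat{S}_C)$ from \cref{cor:3}/\cref{gaotong2}. The only difference is order: the paper first invokes \cref{propo:6} and then shows the resulting point is an equilibrium, whereas you first parameterize all equilibria via \cref{lem:1} and then select the unique one in the compatibility class via \cref{propo:6}; your ordering makes the uniqueness claim slightly more transparent, but the content is the same.
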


\begin{proof}
{According to \mbox{\cref{propo:6}}, for any $\hat{x}_{0}=x_{0_\bot}\in \mathbb{R}^{n-q}_{> 0}$ there exists a unique $\mu\in \mathrm{Ker}(\hat{S}_{C}^{\top})$ such that
$D_1[\hat{x}^{\ast}\cdot\mathrm{Exp}(\mu)-\hat{x}_{0}]\in \text{Im}(\hat{S}_{C})$,
which together with \mbox{\cref{cor:3}} yields $[\tilde{x}^{\ast}\cdot\mathrm{Exp}(\mu)-\tilde{x}_{0}]\in \text{Im}(\tilde{S}_{C})$. Note that $\hat{x}^{\ast}$ and $\tilde{x}^{\ast}$ are an equilibrium in $(\mathcal{\hat{X}},\hat{\mathcal{C}}_C,\hat{\mathcal{R}}_C,\hat{\mathcal{K}}_C)$ and $(\mathcal{\tilde{X}},\tilde{\mathcal{C}}_C,\tilde{\mathcal{R}}_C,\tilde{\mathcal{K}}_C)$, respectively. Set $\hat{x}^{\dagger}=\hat{x}^{\ast}\cdot\mathrm{Exp}(\mu)$, which naturally satisfies $\hat{x}^{\dagger}\in \mathbb{R}^{n-q}_{> 0}$, then we get $\mu=\text{Ln}(\frac{\hat{x}^{\dagger}}{\hat{x}^{\ast}})\in\text{Ker}(\hat{S}_{C}^{\top})$. Further based on \mbox{\cref{lem:1}}, we obtain that $\hat{x}^{\dagger}$ is an equilibrium in $(\mathcal{\hat{X}},\hat{\mathcal{C}}_C,\hat{\mathcal{R}}_C,\hat{\mathcal{K}}_C)$, so $\tilde{x}^{\dagger}$ is an equilibrium in $(\mathcal{\tilde{X}},\tilde{\mathcal{C}}_C,\tilde{\mathcal{R}}_C,\tilde{\mathcal{K}}_C)$. Therefore, $\tilde{x}^{\dagger}\in\tilde{\mathscr{S}}_C(\tilde{x}_0)\cap \mathbb{R}^{n-q}_{> 0}$. The uniqueness of $\mu$ guarantees that $\tilde{x}^{\dagger}$ is a unique equilibrium in $\tilde{\mathscr{S}}_C(\tilde{x}_0)\cap \mathbb{R}^{n-q}_{> 0}$.}
\end{proof}

\begin{remark}[Robustness of the first $n-q$ species at equilibrium]\label{rem:9}
{This theorem means that the reverse reconstruction of a mass-conserved MAS $(\mathcal{X,C,R,K})\in\mathscr{N}_\mathrm{Com}$ admitting an equilibrium $x^*\in\mathbb{R}^{n}_{\textgreater 0}$ and starting from any initial sate $x_0\in\mathbb{R}^{n}_{\textgreater 0}$ must contain an equilibrium, and moreover, contains precisely an equilibrium $\tilde{x}^\dag\in\mathbb{R}^{n-q}_{\textgreater 0}$ in each positive stoichiometric compatibility class. However, it should be noted that $\tilde{x}^\dag$ does not guarantee to map an equilibrium in the original system, since from the fact $S_1v(x^\dag)=\tilde{S}\tilde{v}(\tilde{x}^\dag)=\mathbbold{0}_{n-q}$ the last $q$ species in $x^\dag$ can not guarantee to be positive despite being arbitrary. If there exist multiple equilibria in $(\mathcal{X,C,R,K})$ mapped from $\tilde{x}^\dag$, the concentrations of the first $n-q$ species at these different equilibria will be robust with respect to the last $q$ species in the initial state $x_0$, i.e., robust with respect to $x_{0_{\top}}$. Specially, if $\mathrm{dim}\left(\mathrm{Im}(\hat{S}_{C})\right)=n-q$, then $\mu=\mathbbold{0}_{n-q}$. At this time, $x^\dag_{\bot}=x^*_{\bot}$, which indicates the concentrations of the first $n-q$ species will keep unchanged at all of different equilibria, i.e., absolute concentration robustness\mbox{\cite{shinar2010structural}.}}
\end{remark}

\subsection{Asymptotic stability characterized for MASs in $\mathscr{N}_\mathrm{Com}$}
Now we can characterize the local asymptotic stability of any MAS in $\mathscr{N}_\mathrm{Com}$ utilizing the complex balanced reconstruction strategy.

\begin{theorem}\label{thm:4}
For any {mass-conserved} MAS $(\mathcal{X,C,R,K})\in\mathscr{N}_\mathrm{Com}$ described by \mbox{\cref{CRN-dynamics0}}, assume that $(\mathcal{\hat{X}},\hat{\mathcal{C}}_C,\hat{\mathcal{R}}_C,\hat{\mathcal{K}_C})$ is its complex balanced reconstruction with $D$ given by \cref{reconstruction matrix} as the reconstructing matrix, and $(\mathcal{\tilde{X}},\tilde{\mathcal{C}}_C,\tilde{\mathcal{R}}_C,\tilde{\mathcal{K}}_C)$ is the corresponding reverse reconstruction. Let $x^*\in\mathbb{R}^n_{\textgreater 0}$ be an equilibrium in $(\mathcal{X,C,R,K})$, then $\tilde{x}^*\in\mathbb{R}^{n-q}_{\textgreater 0}$ is locally asymptotically stable with respect to all initial conditions in $\tilde{\mathscr{S}}_C(\tilde{x}^*)\cap \mathbb{R}^{n-q}_{> 0}$ near $\tilde{x}^*$.
\end{theorem}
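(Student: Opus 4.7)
The plan is to exhibit a weighted pseudo-Helmholtz function on the reverse reconstruction state space as a strict Lyapunov function and then invoke LaSalle's invariance principle together with \cref{gaoReverse}. Define
\begin{equation*}
\tilde{G}(\tilde{x}) = \sum_{i=1}^{n-q} d_i \left( \tilde{x}_i^* - \tilde{x}_i - \tilde{x}_i \ln \frac{\tilde{x}_i^*}{\tilde{x}_i} \right), \quad \tilde{x} \in \mathbb{R}^{n-q}_{>0},
\end{equation*}
where the $d_i$ are the diagonal entries of $D_1$. Because the sum now runs over \emph{all} coordinates of the reverse reconstruction (the dimension reduction exploited in \cref{theoremGibbs} is unnecessary here, since the reverse reconstruction already lives in $\mathbb{R}^{n-q}$), $\tilde{G}$ is nonnegative and attains its unique global minimum value $0$ precisely at $\tilde{x}^*$.

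The key computational step is to show that the time derivative of $\tilde{G}$ along $\dot{\tilde{x}} = \tilde{S}\tilde{v}(\tilde{x})$ coincides with the Horn-Jackson Lyapunov derivative of the ordinary pseudo-Helmholtz function $G$ along the \emph{complex balanced} reconstruction. Using $\tilde{S} = D_1^{-1}\hat{S}_C$ and $\tilde{v}(\tilde{x}) = \hat{v}_C(\hat{x})$ from \cref{gaotong2}, together with the identity $\nabla \tilde{G}(\tilde{x}) = D_1\nabla G(\hat{x})$ (valid because $\tilde{x}=\hat{x}$ and $\tilde{x}^*=\hat{x}^*$ entry-wise), the factors $D_1$ and $D_1^{-1}$ cancel exactly:
\begin{equation*}
\dot{\tilde{G}}(\tilde{x}) = \nabla^\top G(\hat{x})\, D_1 \cdot D_1^{-1} \hat{S}_C \hat{v}_C(\hat{x}) = \nabla^\top G(\hat{x})\,\hat{S}_C \hat{v}_C(\hat{x}).
\end{equation*}
By the classical result underlying \cref{thm:1}, the right-hand side is nonpositive on $\mathbb{R}^{n-q}_{>0}$ and vanishes if and only if $\hat{x}$ is an equilibrium of the complex balanced reconstruction, equivalently (via the identification $\hat{x}=\tilde{x}$) of the reverse reconstruction.

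With strict positive-definiteness of $\tilde{G}$ at $\tilde{x}^*$ and nonpositivity of $\dot{\tilde{G}}$, Lyapunov's theorem already yields stability of $\tilde{x}^*$. To upgrade to local asymptotic stability I would apply LaSalle's invariance principle restricted to $\tilde{\mathscr{S}}_C(\tilde{x}^*) \cap \mathbb{R}^{n-q}_{>0}$: the set $\{\dot{\tilde{G}} = 0\}$ inside this positive stoichiometric compatibility class consists of equilibria of the reverse reconstruction, and \cref{gaoReverse} guarantees that the only such equilibrium in $\tilde{\mathscr{S}}_C(\tilde{x}^*) \cap \mathbb{R}^{n-q}_{>0}$ is $\tilde{x}^*$ itself. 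Hence the largest invariant subset of $\{\dot{\tilde{G}}=0\}$ is the singleton $\{\tilde{x}^*\}$, and every trajectory starting sufficiently close to $\tilde{x}^*$ must converge to it.

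I expect the main obstacle to be the standard but non-trivial boundary analysis: one must verify that for small $c>0$ the sub-level sets $\{\tilde{x} : \tilde{G}(\tilde{x}) \leq c\} \cap \tilde{\mathscr{S}}_C(\tilde{x}^*)$ are compact and contained in $\mathbb{R}^{n-q}_{>0}$, so that LaSalle is legitimately applicable and trajectories starting near $\tilde{x}^*$ neither blow up nor escape to the coordinate hyperplanes. This mirrors the care already taken after \cref{thm:1} for the ordinary pseudo-Helmholtz function: the limit $\tilde{G}(\tilde{x}) \to \sum_i d_i \tilde{x}_i^*$ as any coordinate $\tilde{x}_i$ approaches $0$ ensures that a sufficiently small initial energy confines the trajectory to the positive orthant and inside the prescribed stoichiometric compatibility class, after which the LaSalle conclusion completes the proof.
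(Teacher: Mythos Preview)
Your proposal is correct and follows essentially the same route as the paper: define the weighted pseudo-Helmholtz function $\tilde{G}$, use the identities $\tilde{S}_C=D_1^{-1}\hat{S}_C$ and $\tilde{v}_C=\hat{v}_C$ from \cref{gaotong2} so that the $D_1$ factors cancel and $\dot{\tilde{G}}$ reduces to the classical Horn--Jackson dissipation for the complex balanced reconstruction, and then invoke \cref{gaoReverse} to pin down $\tilde{x}^*$ as the unique equilibrium in its class. The paper draws the conclusion directly from strict negativity of $\dot{\tilde{G}}$ on $\tilde{\mathscr{S}}_C(\tilde{x}^*)\cap\mathbb{R}^{n-q}_{>0}\setminus\{\tilde{x}^*\}$ rather than phrasing it via LaSalle, and it omits the boundary discussion you add, but these are presentational rather than substantive differences.
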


\begin{proof}
{From $S_1v(x^*)=\tilde{S}\tilde{v}(\tilde{x}^*)$, $\tilde{x}^*$ is an equilibrium in $(\mathcal{\tilde{X}},\tilde{\mathcal{C}}_C,\tilde{\mathcal{R}}_C,\tilde{\mathcal{K}}_C)$. }
Further based on \cref{gaoReverse}, $\tilde{x}^*$ is the unique equilibrium in $\tilde{\mathscr{S}}_C(\tilde{x}^*)\cap \mathbb{R}^{n-q}_{> 0}$. For the reverse reconstruction $(\mathcal{\tilde{X}},\tilde{\mathcal{C}}_C,\tilde{\mathcal{R}}_C,\tilde{\mathcal{K}}_C)$ , take the generalized {pseudo-Helmholtz function $\tilde{G}(\tilde{x})$} as the Lyapunov function, which is naturally positive definite with respect to $\tilde{x}-\tilde{x}^*$, as said in \cref{theoremGibbs}. Further, we have its time derivative as
\begin{equation*}
  \dot{\tilde{G}}(\tilde{x})=\nabla ^{\top}\tilde{G}(\tilde{x})\tilde{S}_{C}\tilde{v}_C(\tilde{x})
  =\nabla ^{\top}G(\tilde{x})D_1\tilde{S}_{C}\tilde{v}_C(\tilde{x})=\nabla ^{\top}G(\hat{x})\hat{S}_{C}\hat{v}_{C}(\hat{x}).
\end{equation*}
Note that the pseudo-Helmholtz function $G(\hat{x})$ is a valid Lyapunov function for a complex balanced MAS to suggest the local asymptotic stability at the equilibrium \mbox{\cite{Feinberg1995,Horn72}}, so $\forall~\hat{x}$ we have $\nabla ^{\top}G(\hat{x})\hat{S}_{C}\hat{v}_{C}(\hat{x})\leq 0$ with equality hold at equilibria of the complex balanced reconstruction $(\mathcal{\hat{X}},\hat{\mathcal{C}}_C,\hat{\mathcal{R}}_C,\hat{\mathcal{K}_C})$, i.e., at equilibria of the reverse reconstruction $(\mathcal{\tilde{X}},\tilde{\mathcal{C}}_C,\tilde{\mathcal{R}}_C,\tilde{\mathcal{K}}_C)$.  Since it is impossible for $\tilde{\mathscr{S}}_C(\tilde{x}^*)\cap \mathbb{R}^{n-q}_{> 0}$ to include other equilibrium of the reverse reconstruction,  $\dot{\tilde{G}}(\tilde{x})\leq 0$ with equality hold if and only if $\tilde{x}=\tilde{x}^*$ in this positive stoichiometric compatibility class. Therefore, $\tilde{x}^*$ is locally asymptotically stable with all initial conditions in $\tilde{\mathscr{S}}_C(\tilde{x}^*)\cap \mathbb{R}^{n-q}_{> 0}$ near $\tilde{x}^*$.
\end{proof}

\begin{remark}\label{rem:10}
\cref{thm:4} indicates that for the reverse reconstruction $(\mathcal{\tilde{X}},\tilde{\mathcal{C}}_C,\tilde{\mathcal{R}}_C,\tilde{\mathcal{K}}_C)$, $\forall~\epsilon,\epsilon' \textgreater 0$, $\exists~\delta(\epsilon)\textgreater 0$ such that every solution of $\dot{\tilde{x}}=\tilde{S}_{C}\tilde{v}_{C}(\tilde{x})$ having initial condition within distance $\delta(\epsilon)$, i.e., $\|\tilde{x}_0-\tilde{x}^*\|\textless \delta(\epsilon)$, and $\tilde{x}_0\in\tilde{\mathscr{S}}_C(\tilde{x}^*)\cap \mathbb{R}^{n-q}_{> 0}$, the equilibrium $\tilde{x}^*$ remains within distance $\epsilon$, i.e., $\|\tilde{x}-\tilde{x}^*\|\textless\epsilon$ for all $t\geq 0$ and $\lim_{t\rightarrow{\infty}}\|\tilde{x}(t)-\tilde{x}^*\|\leq \epsilon'$.
\end{remark}

\begin{theorem}\label{thm:5}
For a {mass-conserved} MAS $(\mathcal{X,C,R,K})\in\mathscr{N}_\mathrm{Com}$, let $x^*\in\mathbb{R}^n_{\textgreater 0}$ be any equilibrium, then it is locally asymptotically stable.
\end{theorem}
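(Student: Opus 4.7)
The plan is to lift the local asymptotic stability of $\tilde{x}^{*}$ in the reverse reconstruction, already established in \cref{thm:4}, back to the original mass-conserved MAS by exploiting two structural facts proved earlier: the dynamical equivalence of the first $n-q$ species (\cref{propo:4}), and the mass conservation law that pins down the remaining $q$ species uniquely in terms of the free ones (\cref{DegreeofFreedom}).

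First I would fix any initial condition $x_0\in\mathscr{S}(x^{*})\cap\mathbb{R}^{n}_{> 0}$ sufficiently close to $x^{*}$ and let $\tilde{x}_0=x_{0_{\bot}}\in\mathbb{R}^{n-q}_{> 0}$ denote its projection onto the first $n-q$ coordinates. Since $x_0\in\mathscr{S}(x^{*})$ and $\mathrm{Im}(\tilde{S}_C)$ captures the first $n-q$ rows of $\mathrm{Im}(S)$ (via \cref{cor:3}), the projected initial condition $\tilde{x}_0$ lies in $\tilde{\mathscr{S}}_C(\tilde{x}^{*})\cap\mathbb{R}^{n-q}_{> 0}$, and is close to $\tilde{x}^{*}=x^{*}_{\bot}$ whenever $x_0$ is close to $x^{*}$. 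By \cref{thm:4}, the trajectory $\tilde{x}(t)$ of the reverse reconstruction starting from $\tilde{x}_0$ stays near $\tilde{x}^{*}$ and satisfies $\tilde{x}(t)\to\tilde{x}^{*}$ as $t\to\infty$.

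Next I would invoke \cref{propo:4}, which asserts that $\dot{x}_{\bot}(t)=\dot{\tilde{x}}(t)$ along solutions; together with the matching initial value $x_{\bot}(0)=\tilde{x}(0)=\tilde{x}_0$, this forces $x_{\bot}(t)=\tilde{x}(t)$ for all $t\geq 0$. Consequently the first $n-q$ coordinates of the original trajectory remain close to $x^{*}_{\bot}$ and converge to $x^{*}_{\bot}$. For the remaining $q$ coordinates $x_{\top}(t)$, the mass conservation relation $C^{\top}x(t)=C^{\top}x^{*}$ holds along the entire trajectory (since $x_0\in\mathscr{S}(x^{*})$), so formula \cref{Dependence} of \cref{DegreeofFreedom} expresses $x_{\top}(t)$ as a fixed affine function of $x_{\bot}(t)$ and $x^{*}$. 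Therefore $x_{\top}(t)\to x^{*}_{\top}$ as well, and $x(t)$ stays within distance $O(\|x_0-x^{*}\|)$ of $x^{*}$ throughout.

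Combining these pieces yields both Lyapunov stability and attractivity of $x^{*}$ in $(\mathcal{X,C,R,K})$ for initial conditions in $\mathscr{S}(x^{*})\cap\mathbb{R}^{n}_{> 0}$ sufficiently close to $x^{*}$, which is precisely local asymptotic stability. The main obstacle I anticipate is the bookkeeping on the stoichiometric compatibility classes: one has to verify that the projection sends $\mathscr{S}(x^{*})\cap\mathbb{R}^{n}_{> 0}$ into $\tilde{\mathscr{S}}_C(\tilde{x}^{*})\cap\mathbb{R}^{n-q}_{> 0}$ in a Lipschitz manner near $x^{*}$, and that the affine lift back via \cref{Dependence} preserves positivity in a small enough neighborhood of $x^{*}$. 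Once this positivity-preservation is checked (which follows from continuity together with $x^{*}_{\top}\in\mathbb{R}^{q}_{> 0}$), the rest of the argument is a direct transfer from \cref{thm:4}.
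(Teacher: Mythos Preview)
Your proposal is correct and follows essentially the same route as the paper: invoke \cref{thm:4} for local asymptotic stability of $\tilde{x}^{*}$, use \cref{propo:4} to identify $x_{\bot}(t)$ with $\tilde{x}(t)$, and then apply the affine relation \cref{Dependence} from \cref{DegreeofFreedom} to control $x_{\top}(t)$. The paper's version differs only cosmetically, making the last step quantitative via the explicit Lipschitz bound $\|x-x^{*}\|\leq\sqrt{1+\|C_{r}^{-\top}C_{l}^{\top}\|^{2}}\,\|x_{\bot}-x^{*}_{\bot}\|$ rather than your $O(\|x_0-x^{*}\|)$ phrasing.
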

\begin{proof}
{Assume $x_0\in\mathbb{R}^n_{\textgreater 0}$ to be an initial point for the MAS $(\mathcal{X,C,R,K})$ such that $\tilde{x}_0=x_{0_\bot}\in\tilde{\mathscr{S}}_C(\tilde{x}^*)\cap \mathbb{R}^{n-q}_{> 0}$. From \mbox{\cref{thm:4}} there must exist a neighbourhood of $\tilde{x}^*$ so that if $\tilde{x}_0$ is in this neighbourhood then $\tilde{x}^*$, i.e., $x^*_{\bot}$, is locally asymptotically stable. This means that, based on \mbox{\cref{rem:10}} and \mbox{\cref{propo:4}}, $\forall~\epsilon,\epsilon' \textgreater 0$, $\exists~\delta(\epsilon)\textgreater 0$, when $\|x_{0_{\bot}}-x^*_{\bot}\|\textless \delta(\epsilon)$ we have $\|x_{\bot}-x^*_{\bot}\|\textless\epsilon$ for all $t\geq 0$ and $\lim_{t\rightarrow{\infty}}\|x_{\bot}(t)-x^*_{\bot}\|\leq \epsilon'$. From \mbox{\cref{Dependence}}, we get} $${\|x_{\top}-x^*_{\top}\|\leq\|{C}_{r}^{-\top}{C}_{l}^{\top}\|\cdot \|x_{\bot}-x^*_{\bot}\|.}$$
{Hence,}
$${\|x-x^*\|\leq \sqrt{1+\|{C}_{r}^{-\top}{C}_{l}^{\top}\|^2}\cdot \|x_{\bot}-x^*_{\bot}\|.}$$
{Note that $\sqrt{1+\|{C}_{r}^{-\top}{C}_{l}^{\top}\|^2}$ is a constant, so we have $\forall~\epsilon_1=\sqrt{1+\|{C}_{r}^{-\top}{C}_{l}^{\top}\|^2}\epsilon\textgreater 0$ and $\epsilon_1'=\sqrt{1+\|{C}_{r}^{-\top}{C}_{l}^{\top}\|^2}\epsilon' \textgreater 0$, $\exists~\delta(\epsilon_1)\textgreater 0$, when $\|x_{0}-x^*\|\textless \delta(\epsilon_1)$ we have $\|x-x^*\|\textless\epsilon_1$ for all $t\geq 0$ and $\lim_{t\rightarrow{\infty}}\|x(t)-x^*\|\leq \epsilon_1'$. This implies that the arbitrary equilibrium $x^*$ in $(\mathcal{X,C,R,K})$ is locally asymptotically stable.}
\end{proof}

{Although all of the above results, including the reverse reconstruction possessing an equilibrium in each positive stoichiometric compatibility class, the local asymptotic stability of any MAS in $\mathscr{N}_\mathrm{Com}$, etc., are achieved under the condition of mass conservation in the network, they also apply to the case that mass conservation is not admitted by the network. For the latter, the case will become simpler since the reconstructing matrix is a positive diagonal matrix. There will be no dimension reduction in the reconstruction and the corresponding reverse reconstruction compared to the original network.} In that case, the reconstruction concept has the same signification as the linear conjugacy proposed in \cite{Johnston2011,Johnston2012,Johnston2013}, as stated in \cref{LinearConjugacyComparison}. However, there is a large difference in using Lyapunov function to characterize asymptotic stability of the MAS between these two strategies. The current reconstruction strategy uses the extension of the {pseudo-Helmholtz function} as the Lyapunov function while the latter uses the traditional {pseudo-Helmholtz function}. {Intuitively}, the extension of the {pseudo-Helmholtz function} and the generalized one lay a basis for deeper studies. In the case that the mass conservation law is admitted, these two concepts are completely different. The utilization of the mass conservation law will reduce the complexity of the dynamical model greatly. In this case there will be no definition for the linear conjugacy.

\section{Systematic method to find complex balanced reconstructions and cases studies}\label{sec:5} This section contributes to designing algorithm to compute the complex balanced reconstructions for a MAS in $\mathscr{N}_\mathrm{Com}$, and then illustrating the proposed algorithm through some representative examples.
\subsection{Algorithm for finding complex balanced reconstructions}
Finding complex balanced reconstructions for a MAS $(\mathcal{X,C,R,K})\in\mathscr{N}_\mathrm{Com}$ means to design $D,\hat{\mathcal{C}}_{C},\hat{\mathcal{R}}_{C}$ and $\hat{\mathcal{K}}_{C}$ such that
\begin{equation}\label{ReconstructionCondition}
\begin{cases}
  D_1S_1v(x)=\hat{S}_{C}\hat{v}_{C}(\hat{x}),\\
  \hat{B}_{C}\hat{v}_{C}(\hat{x}^{\ast})=\mathbbold{0}_{\hat{c}},
\end{cases}
\end{equation}
where $\hat{B}_{C}$ and $\hat{x}^{\ast}$ are the incident matrix and complex balanced equilibrium in the complex balanced MAS $(\mathcal{\hat{X}},\hat{\mathcal{C}}_C,\hat{\mathcal{R}}_C,\hat{\mathcal{K}}_C)$, respectively. Every MAS with the dynamics $\dot{\hat{x}}=\hat{S}_{C}\hat{v}_{C}(\hat{x})$ is a reconstruction needed to be found. Essentially, it is to find realizations with a complex balancing structure for the nonnegative autonomous polynomial system with the vector field of $D_1S_1v(x)$. Therefore, the known algorithms for finding realizations \cite{Szederkenyi2010,Szederkenyi2011a,Szederkenyi2011b,Johnston2013} naturally constitute the base to achieve complex balanced reconstructions.

Note that the dynamics $\dot{x}=Sv(x)$ is expressed in a reaction centered formulation, which is structure-hidden. To reflect the underlying CRN structure, we use complex centered formulation \cite{Szederkenyi2011a} instead of the current one through substituting \cref{Stoichiomatrix} into the dynamical equation, and then replacing $Bv(x)$ by $L\varPsi(x)$, where
$L\in\mathbb{R}^{c\times c}$ is the Kirchhoff matrix that stores the reaction rate coefficients according to \begin{equation}\label{Kirchhoff}
L_{\rho\pi}= \begin{cases}
k_{\pi\rho} & \rho\neq \pi,\\
-\sum_{l\neq \pi}^{c}L_{l\pi} & \rho=\pi,
\end{cases}
\end{equation}
with $k_{\pi\rho}(\rho,\pi=1,\cdots,c)$ indicating the reaction rate coefficient of reaction $Z_{\cdot \pi}\to Z_{\cdot \rho}$, and $\varPsi(x)\in \mathbb{R}^c$ is a polynomial vector with the $\rho$th entry to be
\begin{equation}\label{Psai}
\varPsi_{\rho}(x)=\prod_{i=1}^{n}x^{Z_{i\rho}}.
\end{equation}
Clearly, $L$ is column conservative, constrained by $\mathbbold{1}_{c}^\top L=\mathbbold{0}_{c}^\top$, and $\varPsi(x)$ can be solely identified by $Z$. In \cref{Kirchhoff}, $k_{\pi\rho}=0$ when $Z_{\cdot \pi}\to Z_{\cdot \rho}$ is not existing. Note that these reaction rate coefficients are identified by two complexes, which are essentially the same with those in \cref{ReactionRate} identified by reactions. Utilizing the complex centered formulation, we can rewrite the complex balanced reconstruction condition \cref{ReconstructionCondition} as
\begin{equation}\label{ReconstructionCondition1}
\begin{cases}
{DZL\varPsi(x)=}\begin{bmatrix}{\hat{Z}_C\hat{L}_C\hat{\varPsi}_C(\hat{x})}\\ {\mathbbold{0}_{q}}\end{bmatrix},\\
{\hat{L}_C\hat{\varPsi}_C(\hat{x}^*)=\mathbbold{0}_{\hat{c}},}\\{x_{\top}=C_{r}^{-\top}C^{\top}_l(\hat{x}^*-\hat{x})+x_{\top}^*,}
\end{cases}
\end{equation}
{where the third constraint serves for eliminating those non-free species in the complex balanced reconstruction.} Therefore, the essence of finding complex balanced reconstructions is to find complex balanced realizations, uniquely characterized by the pair $(\hat{Z}_C,\hat{L}_C)$, for the nonnegative autonomous polynomial system with the vector field of $DZL\varPsi(x)$ under given $D$.

There exist a few optimization based algorithms to find realizations, like minimizing/maximizing the sum of reaction rate coefficients \cite{Szederkenyi2011a}, minimizing/maximizing the number of reactions
\cite{Szederkenyi2010,Szederkenyi2011b}, minimizing the deficiency of weakly reversible realizations
\cite{Johnston2013}, etc. Some linear programming and mixed-integer linear programming problems are designed towards these objects. Here, for simplicity we use the strategy of minimizing the sum of reaction rate coefficients to design algorithm for finding complex balanced reconstructions. In addition, we calculate the conserved matrix $C$ for the original MAS $(\mathcal{X,C,R,K})$ beforehand, which may be easily obtained through solving the linear system of equations $S^\top\xi=\mathbbold{0}_r$, and then following \cref{defConservedMatrix}.

With the above information, we can design the linear programming problem towards solving complex balanced reconstructions for the kinetic system $\dot{x}=ZL\varPsi(x)$ to be
\begin{alignat}{1}
\min_{D_1, \hat{L}_{{C}}} & ~~\sum_{\rho=1}^{\hat{c}}\sum_{\pi\neq \rho}^{\hat{c}}\hat{L}_{{C}_{\rho\pi}}   \tag{{P1}} \label{LP for complex reconstruction}\\
 \mbox{s.t.}
 & ~\begin{cases}\nonumber
{DZL\varPsi(x)=}\begin{bmatrix}{\hat{Z}_C\hat{L}_C\hat{\varPsi}_C(\hat{x})}\\ {\mathbbold{0}_{q}}\end{bmatrix},  \\{x_{\top}=C_{r}^{-\top}C^{\top}_l(\hat{x}^*-\hat{x})+x_{\top}^*,}\\
 {\mathbbold{1}_{\hat{c}}^{\top}\hat{L}_{C}=\mathbbold{0}_{\hat{c}}^{\top},} \\
 {\hat{L}_{{C}_{\rho\pi}}\geq 0, ~~~ \forall \rho \neq \pi, ~~ \rho,\pi=1,\cdots,\hat{c},}\\
 {\hat{L}_{C}\hat{\varPsi}_{C}(\hat{x}^{\ast})=\mathbbold{0}_{\hat{c}},}\\
 {\epsilon\leq d_{i}\leq\frac{1}{\epsilon}, ~~~i=1,\cdots,n-q,}
\end{cases}
\end{alignat}
where $D_1=\mathrm{diag}(d_i)$ and $\hat{L}_{C}$ are the decision variables, and $\epsilon> 0$ is a given very small positive number. Once $Z$ and $L$ are given, then $x^*$, i.e., $\hat{x}^*=x^*_{\bot}$, may be calculated based on $ZL{\varPsi}(x^{\ast})=\mathbbold{0}_n$. These parameters together with a given $\epsilon$ are known data for \cref{LP for complex reconstruction}. The pair $(\hat{Z}_C,\hat{L}_C)$ solved from \cref{LP for complex reconstruction}, namely, represents the desired complex balanced reconstruction.

\begin{table*}\tiny
\centering \caption{Some mass action systems and their complex balanced reconstructions} \label{table:1}
\begin{tabular}{|m{10pt}<{\centering}|m{115pt}<{\centering}|m{36pt}<{\centering}|m{115pt}<{\centering}|m{75pt}<{\centering}|}
\hline
\tabincell{c}{No.} & \tabincell{c}{MAS\\$(\mathcal{X,C,R,K})$} & \tabincell{c}{Equilibrium\\$x^*$ }  & \tabincell{c}{Complex Balanced Reconstruction\\$(\mathcal{X},\hat{\mathcal{C}}_C,\hat{\mathcal{R}}_C,\hat{\mathcal{K}}_C)$}  & \tabincell{c}{Reconstructing Matrix\\$D$} \\
\hline\hline
1
&
\begin{minipage}{4cm} \vspace{-0.0cm}\centering
\multirow{2}*{\tabincell{c}{$\xymatrix{ 2X_{1} \ar ^{1~~} [r] & 2X_{2} }$\\
$\xymatrix{X_{2} \ar ^{1~~} [r] & X_{1}}$}}
 \vspace{0.8cm}\end{minipage}
&
\begin{minipage}{1.1cm} \vspace{0.1cm}\centering
$\begin{bmatrix}
1 \\
2 \\
\end{bmatrix}$
 \vspace{0.05cm}\end{minipage}
&
\begin{minipage}{4cm} \vspace{0.1cm}\centering
$\xymatrix{X_{1} \ar @{ -^{>}}^{0.01~}  @< 1pt> [r] & \varnothing^\S  \ar @{ -^{>}}^{0.008~} @< 1pt> [l] \ar @{ -^{>}}^{0.011~~}  @< 1pt> [r] & 2X_{1} \ar @{ -^{>}}^{0.009~~} @< 1pt> [l]}$
\vspace{0.2cm}\end{minipage}
&
$\begin{bmatrix}
0.01 & 0  \\
1& 1
\end{bmatrix}$\\
\hline
2
&
\begin{minipage}{4cm} \vspace{0.1cm}\centering
\xymatrix{  2X_1  \ar @{ -^{>}}^{1~~~~~}  @< 1pt> [d]\\
  X_{1}+X_{2} \ar @{ -^{>}}^{~2}  @< 1pt> [u] \ar ^{~~1}  @< 1pt> [r]  &2X_2}
\vspace{0.2cm}\end{minipage}
 \begin{minipage}{4cm} \vspace{0.3cm}\centering
\vspace{0.2cm}\end{minipage}
&
\begin{minipage}{1.1cm} \vspace{0.1cm}\centering
$\begin{bmatrix}
1 \\
1 \\
\end{bmatrix}$
 \vspace{0.05cm}\end{minipage}
&
\begin{minipage}{4cm} \vspace{0.8cm}\centering
\multirow{2}*{\tabincell{c}{$\xymatrix{2X_{1} \ar @{ -^{>}}^{0.02~}  @< 1pt> [r]   & X_1 \ar @{ -^{>}}^{0.02~} @< 1pt> [l] }$ }}
\vspace{1.3cm}\end{minipage}
& $\begin{bmatrix}
0.01 & 0  \\
1 & 1
\end{bmatrix}$  \\
\hline
3
&
\begin{minipage}{4cm} \vspace{0.2cm}\centering
\multirow{3}*{\tabincell{c}{$\xymatrix{ X_{1}+X_2 \ar ^{~~~9} [r] & X_{3} }$\\
\xymatrix{X_{1} \ar @{ -^{>}}^{7}  @< 1pt> [r]  \ar @{ -^{>}}^{1}  @< 1pt> [rd] & X_2 \ar @{ -^{>}}^{10} @< 1pt> [l] \ar @{ -^{>}}^{1~~~~~~}  @< 1pt> [d]\\
 & X_{3} \ar @{ -^{>}}^{7}  @< 1pt> [u]  \ar @{ -^{>}}^{1}  @< 1pt> [lu]}
}}
 \vspace{2.2cm}\end{minipage}
&
\begin{minipage}{1.1cm} \vspace{0.1cm}\centering
$\begin{bmatrix}
\frac{1}{3} \\
\frac{1}{3} \\
\frac{1}{3}\\
\end{bmatrix}$
 \vspace{0.05cm}\end{minipage}
&
\begin{minipage}{4cm} \vspace{0.2cm}\centering
\multirow{3}*{\tabincell{c}{\xymatrix{ X_{1}+X_2  \ar @{ -^{>}}^{~~~~0.09}  @< 1pt> [r]   & \varnothing\ar @{ -^{>}}^{~~~~0.01}  @< 1pt> [l]}\\\xymatrix{X_{1} \ar _{0.09}  [rd] & X_2 \ar _{~~~0.09} [l]    \ar @{ -^{>}}^{0.09~}  @< 1pt> [d]\\
 & \varnothing \ar @{ -^{>}}^{0.06}  @< 1pt> [u] }
}}
\vspace{2.2cm}\end{minipage}
&
$\begin{bmatrix}
0.01& 0 & 0 \\
0 & 0.01 & 0 \\
1 & 1 & 1
\end{bmatrix}$\\
\hline
4
&
\begin{minipage}{4cm} \vspace{0cm}\centering
\multirow{2}*{\tabincell{c}{$\xymatrix{X_{1}+X_{2} \ar ^{~~~~2} [r] & X_{3} }$\\$\xymatrix{2X_{3} \ar ^{1~~~~~} [r] &2X_{1}+2X_{2}}$
}}
 \vspace{0.9cm}\end{minipage}
&
\begin{minipage}{1.1cm} \vspace{0.1cm}\centering
$\begin{bmatrix}
0.5 \\
0.5 \\
0.5 \\
\end{bmatrix}$
 \vspace{0.05cm}\end{minipage}
&
\begin{minipage}{4cm} \vspace{0.4cm}\centering
\xymatrix{ X_3  \ar @{ -^{>}}^{0.04}  @< 1pt> [r]   & \varnothing\ar @{ -^{>}}^{0.02}  @< 1pt> [l]}
\vspace{0.4cm}\end{minipage}
& $\begin{bmatrix}
0.01 & 0 & 0 \\
1 & 1 & 2 \\
1 & 2 & 3
\end{bmatrix}$  \\
\hline
5
&
\begin{minipage}{4cm} \vspace{0.3cm}\centering
\multirow{4}*{\tabincell{c}{$\xymatrix{4X_{3} \ar ^{3.1~~~~} [r] & X_{2}+2X_{3} }$ \\
$\xymatrix{2X_{2} \ar ^{5~~~~} [r] & X_{1}+X_{2}}$\\
$\xymatrix{X_{4} \ar ^{2.8~} [r] & X_{1} \ar ^{10.6~~} [r] & 2X_{3} }$ \\
$\xymatrix{X_{1}+X_{4} \ar ^{~~9.1} [r] & 2X_{4}}$ }}
 \vspace{2.0cm}\end{minipage}
&
\begin{minipage}{1.1cm} \vspace{0.1cm}\centering
$\begin{bmatrix}
0.3077 \\
0.8077 \\
1.0128 \\
0.6
\end{bmatrix}$
 \vspace{0.05cm}\end{minipage}
&
\begin{minipage}{4cm} \vspace{0.3cm}\centering
\multirow{2}*{\tabincell{c}{$\xymatrix{4X_{3} \ar ^{3.1} [r]   & 2X_{2} \ar ^{~5} [d]\\
 & X_{1}   \ar ^{10.6~~} [lu]} $ \\
$\xymatrix{X_{4} \ar @{ -^{>}}^{2.8~~~}  @< 1pt> [r]   & X_{1}+X_{4} \ar @{ -^{>}}^{9.1~~~} @< 1pt> [l] }$ }}
\vspace{2.5cm}\end{minipage}
& $\begin{bmatrix}
1 & 0 & 0 & 0\\
0 & 2 & 0 & 0\\
0 & 0 & 2 & 0\\
2 & 2 & 1 & 2
\end{bmatrix}$  \\
\hline
6
&
\begin{minipage}{4cm} \vspace{0.5cm}\centering
\multirow{3}*{\tabincell{c}{
\xymatrix{X_{1} \ar @{ -^{>}}^{1}  @< 1pt> [r]   & X_{3} \ar @{ -^{>}}^{1}  @< 1pt> [l] }\\\xymatrix{X_2 \ar @{ -^{>}}^{1}  @< 1pt> [r]   & X_4 \ar @{ -^{>}}^{1}  @< 1pt> [l] }\\
$\xymatrix{X_{1}+X_{2} \ar ^{1~~} [r] & X_{3}+X_{4} }$}}
 \vspace{2.2cm}\end{minipage}
&
\begin{minipage}{1.1cm} \vspace{0.1cm}\centering
$\begin{bmatrix}
1  \\
1  \\
2\\2
\end{bmatrix}$
\vspace{0.05cm}\end{minipage}
&
\begin{minipage}{2cm} \vspace{0.1cm}\centering
\xymatrix{&X_1+X_{2}\ar @{ -^{>}}^{0.01}  @< 1pt> [d]\\X_1\ar @{ -^{>}}^{0.02}  @< 1pt> [r]& \varnothing \ar @{ -^{>}}^{0.01}  @< 1pt> [u]\ar @{ -^{>}}^{0.02}  @< 1pt> [l]   \ar @{ -^{>}}^{0.02}  @< 1pt> [r] & X_{2} \ar @{ -^{>}}^{0.02}  @< 1pt> [l]}
\vspace{0.2cm}\end{minipage}
&
$\begin{bmatrix}
0.01 & 0 & 0&0  \\
0 & 0.01 & 0 &0 \\
1 & 1 & 1&1\\
1&2&1&2
\end{bmatrix}$ \\
\hline

 \multicolumn{3}{l}{\hspace{-0.2cm}\vspace{-0.2cm}\scriptsize{$^\S$ $\varnothing $ represents zero complex.}}
\end{tabular}
\end{table*}

\subsection{Cases studies}
In this subsection, some examples of MASs and their complex balanced reconstructions computed through solving the linear programming \cref{LP for complex reconstruction} are shown. \cref{table:1} reports the computation results of complex balanced reconstructions for $6$ MASs. The first two examples are two-species CRNs, and the third and fourth examples are three-species but having the conserved matrices of rank $1$ and $2$, respectively. The fifth and sixth CRNs contain four species. Clearly, every MAS has a complex balanced reconstruction, and for the first, second, fourth and sixth examples the complex balanced reconstructions are even detailed balanced. The reconstructing matrix in each case is also reported in \cref{table:1}.

\section{Conclusions}\label{sec:6}
This paper has developed the generalized {pseudo-Helmholtz function} for stability analysis for general MASs, which fully utilizes the fact that the {pseudo-Helmholtz function} is a valid Lyapunov function for characterizing local asymptotic stability for complex balanced MASs. To match the use of the generalized {pseudo-Helmholtz function} as the Lyapunov function, the notions of reconstruction and reverse reconstruction for the original network have been defined in succession through an invertible matrix storing a positive diagonal matrix and the conserved matrix that captures mass conservation of the original network. These two kinds of defined networks will retain some properties of the original one, such as their equilibrium obtained from the concentrations of the first $n-q$ species of the equilibrium in the original network, the dynamics of the reverse reconstruction bridged to that of the original network by a constant matrix, etc. We have subsequently proved that if the original network has a complex balanced reconstruction, then it is stable. The asymptotic stability of the original network has been further reached based on the complex balanced reconstruction strategy in which the corresponding reverse reconstruction is proved to possess only an equilibrium in each positive stoichiometric compatibility class, and thus to be asymptotically stable. To facilitate applications of the proposed reconstruction strategy, a constructive approach is proposed to compute complex balanced reconstructions for general MASs.

\section*{Acknowledgments}
We would like to acknowledge anonymous reviewers for their valuable comments, especially on the definition of reconstruction. Thanks are also given to Dr. Xiaoyu Zhang for her computation on examples in Table 1, and to the National Natural Science Foundation of China under Grant Nos. 11671418 and 61611130124 for financial support.

\end{document}